\definecolor {processblue}{cmyk}{0.96,0,0,0}
\newcommand{\F}{\mathcal F}
\newcommand{\R}{\mathbb R} 
\tikzstyle{vertex}=[circle,black,draw,fill,text=white,inner sep=2pt]
\NewDocumentCommand{\lockboundingbox}{}{\path[use as bounding box] (current bounding box.south west) rectangle (current bounding box.north east);}
\NewDocumentCommand{\writecoords}{m}{%
	\node (wcno) at (#1){};
	\path (wcno);
	\pgfgetlastxy{\ASJ@wcx}{\ASJ@wcy}
	\path (wcno) node[anchor=north,fill=white]{\footnotesize(\ASJ@wcx,\ASJ@wcy)};
}
\NewDocumentCommand{\setupcoords}{}{
	\pgfmathsetmacro{\rad}{4}
	\foreach \v [count=\c from 0] in {01456,01356,02356,02357,02578,04578,01458}\coordinate (p\v) at (90+360/7*\c:\rad);

	\foreach \u/\v in {01236/04578,02478/01356,14578/02356,23567/01458}\coordinate (p\u) at ($(0,0)!-2!(p\v)$){};
	\coordinate (p13468) at ($(0,0)!2!(p01456)$);
	\coordinate (p01236) at ($(p01356)!-0.55!(p01458)$);
	\coordinate (p14578) at ($(p01458)+(p01356)-(p01236)$);
	\coordinate (p02478) at ($(p01236)!1!-36:(p14578)$);
	\foreach \u/\v/\w in {23567/02478/14578,13468/01236/23567}\coordinate (p\u) at ($(p\v)!1!108:(p\w)$);

	\foreach \v in {01356,02356,02357,02578,04578,01458,01456,01236,02478,14578,23567,13468}{
		\node[vertex] at (p\v){};
		\node[anchor=south] at (p\v){\small\v};
	}
	\colorlet{col0}[rgb]{red>wheel,0,9}
	\colorlet{col1}[rgb]{red>wheel,1,9}
	\colorlet{col2}[rgb]{red>wheel,2,9}
	\colorlet{col3}[rgb]{red>wheel,3,9}
	\colorlet{col4}[rgb]{red>wheel,4,9}
	\colorlet{col5}[rgb]{red>wheel,5,9}
	\colorlet{col6}[rgb]{red>wheel,6,9}
	\colorlet{col7}[rgb]{red>wheel,7,9}
	\colorlet{col8}[rgb]{red>wheel,8,9}
}
\begin{document}
\title{Petruska's question on planar convex sets}

\author{Adam S. Jobson \and Andr\'e E. K\'ezdy\and Jen\H{o} Lehel
\and\\ Timothy J. Pervenecki \and G\'eza T\'oth\thanks{G. T\'oth was   supported by the National Research, Development and Innovation Office, NKFIH, K-111827}
}

\institute{Adam S. Jobson \at
              University of Louisville,  Louisville, KY 40292\\            
              \email{adam.jobson@louisville.edu}     
  \and
           Andr\'e E. K\'ezdy \at
           University of Louisville,  Louisville, KY 40292\\            
              \email{andre.kezdy@louisville.edu}   
   \and
           Jen\H{o} Lehel \at                                 
              Alfr\'ed R\'enyi Mathematical Institute$^*$,
              Budapest, Hungary\\               
              \email{lehelj@renyi.hu}\\
              and University of Louisville,  Louisville, KY 40292
	\and 
	Timothy J. Pervenecki \at 
	University of Louisville, Louisville, KY 40292\\
	 \email{timothy.pervenecki@louisville.edu} 
	\and 
	G\'eza T\'oth \at
Alfr\'ed R\'enyi Mathematical Institute\footnote{}, Budapest, Hungary\\ 
              \email{geza@renyi.hu}
    }

\date{Received: date / Accepted: date}
\maketitle

\begin{abstract} 
Given $2k-1$ convex sets in $\R^2$ such that no point 
of the plane is covered by more than $k$ of the sets, is it true that there are two among the convex sets whose union contains all $k$-covered points of the plane? This question due to Gy. Petruska  has an obvious affirmative answer for $k=1,2,3$; we show here that the claim is also true for $k=4$,
and we present a counterexample for $k=5$. We explain how
Petruska's  geometry question fits into the classical hypergraph extremal problems, called arrow problems,  proposed by P. Erd\H{o}s.
\keywords{set systems \and Erd\H{o}s's arrow problems \and $2$-representable  hypergraphs\and convex sets\and  Helly's theorem}
 \subclass{05D05\and 05C65\and 52A10 \and 52A35 \and 52C15}
\end{abstract}

\section{Introduction}
A family  $\F$ of compact convex sets in  $\R^2$
is called a ${\mathcal P}(k)$-family if
$|\F|=2k-1$ and   no point of the plane is
contained in more than $k$ members of $\F$. 
A ${\mathcal P}(1)$-family consists of one set, 
thus  a ${\mathcal P}(1)$-family is trivially contained in any ${\mathcal P}(2)$-family.
Among the five convex sets of a ${\mathcal P}(3)$-family
there are three sets with no common point; that is, a ${\mathcal P}(3)$-family
always contains a ${\mathcal P}(2)$-family. 
To see this, observe that if any three sets have a non-empty intersection then by Helly's
theorem \cite{Eck}, there would be a point common to all five sets, which is not possible in a ${\mathcal P}(3)$-family. 
Petruska asked\footnote{personal communication in the 1970s \\
-----------------------------------------\\
 * since September 1, 2019 the Alfr\'ed R\'enyi Mathematical Institute  does not belong to the Hungarian Academy of Sciences} whether 
a ${\mathcal P}(k)$-family always contains a ${\mathcal P}(k-1)$-family,  for every $k\geq 2$. Theorem \ref{k=4} answers Petruska's question affirmatively when $k=4$, but
  Theorem \ref{k=5}  provides a counterexample for $k=5$. 

Petruska's question is equivalent to the following question:
is it true that in every family $\F$ of $2k-1$ compact convex sets in the plane
with no $(k+1)$-covered point, there are two members of $\F$
whose union contains all $k$-covered points?
It is worth noting that the question with $2k$ sets has negative answer, as it is shown by the  counterexample in Proposition \ref{2kbound}. (A point is {\it $q$-covered} by $\F$ if it is contained
in at least $q$ members of $\F$.) 

\subsection{}

A family of $2k-1$ intervals with no point of the real line covered more than $k$ times has  an interval including all $k$-covered points. For a proof it is enough to consider the far left and the far right $k$-covered points. By the pigeonhole principle there is an interval containing both, thus all $k$-covered points between them. 
 Notice that the claim is false for a family of $2k$ intervals - just take $k$ copies of each of two disjoint intervals.
 
 The observation above translates into a statement about graphs: if an interval graph of order $2\omega-1$ has maximum cliques of order $\omega$, then there is a vertex common to all maximum cliques. Actually the content of 
   the Hajnal-Folkman Lemma \cite{Hajnal} is that this last statement holds for general graphs, not just interval graphs. 
 Starting from this lemma Erd\H{o}s \cite{E} proposed a plethora of extremal problems by introducing a peculiar arrow notation widening  the scope 
 to hypergraphs, and asking for a $t$-vertex  transversal 
rather than just a common vertex of the maximum cliques. 
 
 A special instance of those  `arrow-problems' concerning $3$-uniform hypergraphs is the appropriate framework for investigating Petruska's question and its possible extensions.
  For integers $t\geq 1$ and $ \omega\geq 3$, let $H$ be a $3$-uniform hypergraph with maximum clique size $\omega(H)=\omega$ such that no $t$-element set can meet all
 maximum cliques; determine the minimum order, $n(\omega,t)$, of $H$. 
 The special value $t=1$ gained  the most attention in the literature,
and estimations on $n(\omega,1)$ were obtained by several authors, see 
\cite{GYLT,SzP,Tu}. 

 Suppose now that the $3$-uniform hypergraph $H$ with the properties above is required to be $2$-representable; 
 that is the vertices of $H$ are (compact) convex sets in $\R^2$ and its edges describe the $3$-wise intersections of the members of the family. Let the minimum order 
 of such hypergraphs be denoted by $n^*(\omega,t)$. 
The results in this paper show
 that $n^*(4,2)= 8$ and  $n^*(5,2)= 9$
 (Theorems \ref{k=4sharp} and \ref{k=5sharp}). 
 Estimating $n^*(\omega,t)$, especially the gap between 
 $n^*(\omega,t)$ and $n(\omega,t)$
presents further challenges which are mentioned in Section 
 \ref {arrow}. \\

\noindent{\bf 1.2}\\

The main tool in the study of combinatorial properties of convex sets is the nerve of a family of sets describing the intersection pattern of its members. 
Here we code intersection patterns as hypergraphs. All hypergraphs are finite and the convex sets are compact in  $\R^2$.

Let $K_n^{(3)}$ be the $3$-uniform clique
on $n$ vertices (edges are all the $3$-element subsets
of an underlying vertex set of cardinality $n$).
We represent  a family of $n$ convex sets  of $\R^2$
with the vertices of  $K_n^{(3)}$;
 the $3$-wise intersections  in the family are represented
 by a two--coloring  of the edges of  $K_n^{(3)}$:
 an edge is {\it red} if the convex sets 
the three vertices represent have a common point, it is {\it blue} otherwise. 
Colorings of a $ K_n^{(3)}$ obtained in this way
will be referred as to {\it $2$-representable} or {\it convex red/blue cliques}, 
and the family is said to
represent the red/blue clique.  

In standard terms, all red edges in a convex red/blue clique together with the vertices  and all $2$-element subsets contained by the red edges
correspond to the $2$-dimensional skeleton of the nerve complex  of a family of convex sets in  $\R^2$. On the other hand, by Helly's theorem, the $3$-wise intersections in a family of convex sets (a convex red/blue clique)  fully determines the nerve  of a representing family (if one assumes that every pair appears in some triple).

A hypergraph is called red (blue) provided all of its edges
are red (blue).  
In our hypergraph model Petruska's question becomes: if a 
convex red/blue ($3$-uniform) clique of order $2k-1$ contains no red 
clique with more than $k$ vertices, then there exist two vertices such that each red $k$-clique contains at least one of them.
This claim is verified for $k=4$ in Theorem \ref{k=4}, and in Theorem \ref{k=5} a construction is presented to refute the claim for $k=5$. The proof and the construction of the counterexample use remarkable combinatorial properties of  convex sets in  $\R^2$ extending the realm of Helly's classical theorem; among others, Lov\'asz's colorful Helly theorem, the $2$-collapsibility of an abstract simplicial complex, 
or Kalai's $f$-vector characterization of convex abstract simplicial complexes. 

Most of the combinatorial properties used here will be expressed
in terms of forbidden configurations in convex red/blue cliques, thus emphasizing the obvious fact that the investigation of the intersection patterns cannot be separated from the non-intersection patterns.
 No characterization is known for the $2$-representability 
 of an abstract simplicial complex, or equivalently,  for the convexity of a red/blue clique.  In Section  \ref{forbidden} a few non-convex red/blue subconfigurations are derived from 
 a basic property 
 (Lemma \ref{hole}) of three pairwise intersecting convex sets with no common point in the plane. Related classical intersection theorems of families of convex sets in $\R^d$ are due to Klee \cite{Klee}, Berge \cite{berge}, and Ghouila-Houri \cite{ghouila}.
 
The proof of  Petruska's question for $k=4$ and the counterexample for $k=5$ are given in Sections \ref{7convex} and \ref{9convex}, respectively. The relationship of Petruska's question with hypergraph extremal problems, in particular, with special instances of Erd\H{o}s' arrow problems on $3$-uniform hypergraphs is discussed in Section \ref{arrow}.

\section{Forbidden red/blue configurations}
\label{forbidden}

In this section we collect a few red/blue subconfigurations that are forbidden from convex red/blue cliques. Some of these can be derived from the next lemma characterizing the `hole' surrounded by three pairwise intersecting convex sets in the plane with no common point.\footnote{A $d$-dimensional extension of the Lid lemma  was obtained by J. Lehel and G. T\'oth, On the hollow enclosed by $d+1$ convex sets in $\R^d$}

Let $A,B,C\subset \R^2$ be compact convex sets, and assume that any two have a non-empty intersection, but $A\cap B\cap C=\varnothing$.  We say that $A,B,C$ with this property form a {\it hole} in the plane. A compact convex set $M$ satisfying
$M\cap(X\cap Y)\neq \varnothing$, for every $X,Y\in \{A,B,C\}$, is called a {\it convex lid} on $A\cup B\cup C$. 

\begin{lemma} 
\label{hole} 
 If $A,B,C$ form a hole, then there exist unique points $p^*\in A\cap B$, $q^*\in B\cap C$, and $r^*\in C\cap A$ such that $p^*,q^*,r^*\in M$, for every  lid $M$.
\end{lemma}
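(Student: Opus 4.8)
The plan is to exhibit the three points explicitly as extreme points of the pairwise intersections, and then use a two-dimensional separation argument to force every lid through them. First I would set up the geometry: since $A\cap B\cap C=\varnothing$ while the three pairwise intersections are non-empty compact convex sets, the three sets $A\cap B$, $B\cap C$, $C\cap A$ are pairwise disjoint (a point in two of them would lie in all three of $A,B,C$). The key observation is that these three disjoint convex sets are ``cyclically arranged'' around the hole, and I would make this precise by choosing a point $a\in A\cap B\cap C^{c}$, $b\in B\cap C\cap A^{c}$, $c\in C\cap A\cap B^{c}$ — one from each of the regions covered by exactly two of the sets — noting such points exist because each pairwise intersection is nonempty but not contained in the third set. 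The triangle $abc$ (or its degenerate segment) then ``encloses'' the hole, and I would orient the plane so that $a,b,c$ occur in this cyclic (say counterclockwise) order.

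Next I would define $p^{*}$, $q^{*}$, $r^{*}$. On $A\cap B$, consider the supporting line in the direction pointing away from the hole; more concretely, among all points of $A\cap B$ I would take the one extremal in the direction perpendicular to segment, or better, use the following: let $r^{*}$ be the point of $A\cap B$ that is ``closest to the $bc$-side'', i.e. extremal toward $C$; symmetrically define $p^{*}\in B\cap C$ extremal toward $A$ and $q^{*}\in C\cap A$ extremal toward $B$. The uniqueness of these extreme points is where convexity and the disjointness of the three intersection bodies should be combined: I expect that the correct functional to extremize is a linear functional whose direction is determined by the hole, and strict convexity of the optimum will follow because $A\cap B$ is a compact convex set contained in the closed half-plane bounded by a line through which $C$ lies entirely on one side (this uses $A\cap B\cap C=\varnothing$ plus separation of the convex sets $A\cap B$ and $C$).

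Now the heart of the argument: let $M$ be any lid, so $M$ meets each of $A\cap B$, $B\cap C$, $C\cap A$. I want to show $p^{*},q^{*},r^{*}\in M$. The idea is that $M$, being convex and meeting all three cyclically-arranged disjoint regions, must ``wrap around'' the hole, and the only way a convex set can touch $A\cap B$ while also touching the far sides $B\cap C$ and $C\cap A$ is to pass through the extreme point $r^{*}$ — any point of $A\cap B$ other than $r^{*}$ can be strictly separated from the pair $\{B\cap C,\,C\cap A\}$ by a line, and then a convex $M$ containing a point on the wrong side of that line together with points of $B\cap C$ and $C\cap A$ on the other side would have to cross $C$, forcing a point in $A\cap B\cap C$ or contradicting that $M$ is a single convex set meeting the required pieces. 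I would formalize this by the following clean statement: $p^{*}$ is the \emph{unique} point of $B\cap C$ lying in the convex hull of $(B\cap C)\cup(A\cap C)\cup(A\cap B)$ on the boundary facing $A$, and any convex set meeting all three must contain the convex hull of its three contact points, which is pinned to pass through $p^{*}$.

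The main obstacle I anticipate is making the ``extremal direction'' well-defined and proving \emph{uniqueness} rigorously: the three functionals defining $p^{*},q^{*},r^{*}$ must be chosen consistently (their directions should be the outward normals of the triangle $abc$ or something canonically attached to the hole), and one must rule out the degenerate case where $A\cap B$ is a segment parallel to the extremal direction, in which case ``the'' extreme point is not unique as a maximizer of that functional but should still be singled out — presumably as the endpoint of the segment on the side toward the shared vertex of the hole. Handling these degeneracies, and verifying that the separation lines can be chosen so that all three arguments fit together cyclically without contradiction, is the delicate part; the convexity inputs themselves (separation theorem, Helly in the plane) are standard. Once the separation picture is set up correctly, the containment $p^{*},q^{*},r^{*}\in M$ for every lid $M$ should follow quickly, and uniqueness of the triple will be immediate since $M$ ranges over lids that include very ``thin'' ones (e.g. segments), forcing any candidate triple to coincide with $(p^{*},q^{*},r^{*})$.
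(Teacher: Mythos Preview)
Your proposal has a genuine gap at its core: the points $p^*,q^*,r^*$ are never actually defined. You suggest taking, say, the point of $A\cap B$ ``extremal toward $C$'', but ``toward $C$'' is not a direction; if you pick a separating line between $A\cap B$ and $C$ and maximize the normal functional, the maximizer depends on which separating line you chose, and for most choices it will \emph{not} be the correct point. You flag this yourself as a degeneracy issue, but it is not a boundary case --- it is the whole problem: the correct $p^*$ is determined by how all three sets sit around the hole, not by $A\cap B$ and $C$ alone. Your separation argument then collapses as well. You claim that any point of $A\cap B$ other than the special one can be strictly separated from $(B\cap C)\cup(C\cap A)$ by a line; but \emph{every} point of $A\cap B$, including $p^*$, can be so separated, since $A\cap B$ is disjoint from $C\supseteq (B\cap C)\cup(C\cap A)$. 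And even granting a separating line, nothing prevents a lid $M$ from having points on both sides of it --- $M$ is allowed to meet $C$ --- so no contradiction is produced. (Your labelling also drifts: $p^*$ starts in $A\cap B$ and later reappears in $B\cap C$.)

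The missing idea, which the paper supplies, is to work with the hole itself rather than with the pairwise intersections. One first proves that every lid contains a fixed point $\omega$ lying in the bounded component $H$ of $\R^2\setminus(A\cup B\cup C)$: separate $A\cap B$ from $C$ by a line $L$, then separate the disjoint intervals $A\cap L$ and $B\cap L$ by a point $\omega\in L$; any lid meets $L$ in an interval containing both, hence $\omega$. Since every triangle with a vertex in each pairwise intersection is itself a lid, one gets that every lid contains $cl(\mathrm{conv}\,H)$. The remaining work is to show $cl(\mathrm{conv}\,H)$ is a \emph{triangle}: each of $A\cap\partial(\mathrm{conv}\,H)$, $B\cap\partial(\mathrm{conv}\,H)$, $C\cap\partial(\mathrm{conv}\,H)$ turns out to be a segment lying on a supporting line, so three lines bound the region and its three vertices are the desired $p^*\in A\cap B$, $q^*\in B\cap C$, $r^*\in C\cap A$. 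Existence and uniqueness fall out simultaneously, with no extremal-direction ambiguity to resolve.
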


 \begin{center} 
\begin{tikzpicture}[scale=.5]
 \tikzstyle{P} = [circle, draw=black!, minimum width=3pt, inner sep=1pt, fill=black]
\tikzstyle{txt}  = [circle, minimum width=1pt, draw=white, inner sep=0pt]

\draw[line width=1pt,rotate=-45] (.5,3.4) ellipse (3.5cm and 2.7cm);
\node[P,label=right:$p^*$] (p) at (3.1,4){};
\node[P,label=above:$r^*$](r) at (1,1){};
\node[P,label=right:$q^*$](q) at (4,0){};

\draw[line width=1pt] (-.9,0) to[out=20,in=-90] (3.3,5.6); 
\draw[line width=1pt] (-.3,1.15) to[out=-5,in=140] (5,-.76); 
\draw[line width=1pt] (4.61,-1.2) to[out=120,in=-90] (3.1,5.5); 

\node ()at(6,3.5){$M$};
\node ()at(4,1.8){$B$};
\node ()at(2,2.8){$A$};
\node ()at(2.2,0.3){$C$};
\end{tikzpicture}
\hskip2cm
\begin{tikzpicture}[scale=.6]
 \tikzstyle{P} = [circle, draw=black!, minimum width=3pt, inner sep=1pt, fill=black]
\tikzstyle{txt}  = [circle, minimum width=1pt, draw=white, inner sep=0pt]

\node[P,label=right:$p$] (p) at (3.2,5.2){};
\node[P,label=left:$r$](r) at (.5,.8){};
\node[P,,label=right:$q$](q) at (4.5,-0.6){};
\draw[dashed] (p)--(q)--(r)--(p);

\draw[line width=.6] (0,2)--(6,1);
\draw[line width=1.8] (0,2)--(1.8,1.7)  (3.42,1.44)--(6,1);
\node ()at(6.3,1){$L$}; 
\draw (-.9,0) to[out=20,in=-90] (3.3,5.6); 
\draw (0,1.1) to[out=-5,in=140] (5,-.76); 
\draw (4.41,-.8) to[out=120,in=-90] (3.1,5.5); 

\node[P,label=above:$\omega$] ()at(2.8,1.54){};
\node ()at(4.5,2.8){$B$};
\node ()at(1,3){$A$};
\node ()at(2,-.6){$C$};
\end{tikzpicture}
\end{center}

\begin{proof}
(a) First we show that there is a point  $\omega\notin A\cup B\cup C$ such that  $\omega\in M$, for each convex lid $M$. 
 There is a strict separating line $L$ 
 between the two disjoint compact convex sets $A\cap B$ and $C$. 
 The closed intervals $A\cap L$ and  $B\cap L$ are non-empty and disjoint;
  therefore, they can be strictly separated by some point $\omega\in L$. 
Let  $p\in M\cap(A\cap B)$, $q\in M\cap(B\cap C)$, $r\in M\cap(C\cap A)$. The point
$\overline{pr}\cap L$ is in $(A\cap L)\cap M$ and the point
$\overline{pq}\cap L$ is in $(B\cap L)\cap M$, thus  
$\omega\in M$ follows, since $M$ is convex.\footnote{$\;\overline{xy}$ is the line segment between the points $x$ and $y$, and $\overleftrightarrow{xy}$ denotes the line through them}

Notice that $L$ intersects the boundary of both $A$ and  $B$. Furthermore, there are several separating lines through $\omega$ in the role of $L$.

(b)  Let $H\subset\R^2$ be the connected region of $\R^2\setminus(A\cup B\cup C)$ containing $\omega$. 
 Let  $K={conv}(H)$ be the convex hull of $H$, denote $\partial K$ the boundary of $K$, and set
  $cl(K)=K\cup \partial K$ for the closure of $K$.  
  Notice that $cl(K)$ is a convex set.
  
For some points
$p\in A\cap B$, $q\in B\cap C$ and $r\in C\cap A$, let $T$ be the closed triangle with these vertices. 
By part (a), $\omega \in T$, and because the sides of $T$ belong to $ A\cup B\cup C$,  $cl(K)\subseteq T$ follows. Thus we obtain that every convex lid $M$ contains $cl(K)$. 

(c)  The arguments in parts (a) and (b) show that 
 $H$ (the hole) is a bounded open region and
 $\partial H\subseteq \partial( A\cup B\cup C)$.
 Moreover, each
of the three sets $\partial A,\partial B,$ and $\partial C$ contains several 
points of $\partial H$ (cut by halflines emanating from $\omega$), and therefore,
$\partial K$ has several points in each of the sets $A, B,$ and $C$.
  
  Let $x,y\in A\cap \partial K$, and 
let  $L_A=\overleftrightarrow{xy}$. We claim that $L_A$ is a supporting line to $cl(K)$; furthermore, $A\cap \partial K$ is a line segment.

 \begin{center} 
\begin{tikzpicture}[scale=.4]
 \tikzstyle{P} = [circle, draw=black!, minimum width=3pt, inner sep=1pt, fill=black]
\tikzstyle{txt}  = [circle, minimum width=1pt, draw=white, inner sep=0pt]
 \tikzstyle{O} = [circle, draw=black!, minimum width=3pt, inner sep=4.5pt]
 
 \draw (-3.9,3) to[out=-20,in=-90] (3.3,7.6); 
\node[txt,label=right:$$] (p) at (2.5,5.7){};
\node[txt,label=left:$$](r) at (-2,2){};
\node[txt,label=right:$$](q) at (4.5,-0.6){};
\draw[dashed] (p)--(-2.3,2.5) (-1.5,1.6)--(p);
\node ()at(-2,4.8){$A$};
\node ()at(4,3.8){$B$};
\node ()at(-1.4,1.5){\small $D$};
\draw[line width=.7] (-3,1.2)--(3.8,6.8) ;
\node ()at(-3,.5){$L_A$}; 
\node[P,label=above:$\omega$] ()at(1.8,2){};
\node[P,label=below:$x$] ()at(-.35,3.4){};
\node[O,dotted,line width=1] ()at (-2,2){};
\node[P,label=left:$w$] ()at(-2,2){};
\node[P,label=above:$y$] ()at(2.5,5.7){};
\draw (4.41,1) to[out=150,in=-90] (2.1,8); 
\end{tikzpicture}
\hskip2cm
\begin{tikzpicture}[scale=.4]
 \tikzstyle{P} = [circle, draw=black!, minimum width=3pt, inner sep=1pt, fill=black]
\tikzstyle{txt}  = [circle, minimum width=1pt, draw=white, inner sep=0pt]
 \tikzstyle{O} = [circle, draw=black!, minimum width=3pt, inner sep=7pt]
 
\node[txt,label=right:$$] (p) at (2.5,5.7){};
\node[txt,label=left:$$](r) at (-2,2){};
\node[txt,label=right:$$](q) at (4.5,-0.6){};
\draw[dashed] (p)--(q)--(r);

\draw[line width=1] (-3,1.2)--(3.8,6.8) ;
\node ()at(-2.5,.5){$L_A$}; 
\node[P,label=above:$\omega$] ()at(1.8,2){};
\node[P,label=above:$z^\prime$] ()at(-.5,4){};
\node[P,label=below:$z$] ()at(-.35,3.4){};
\node[O,dashed] ()at (-.35,3.4){};
\node[P,label=above:$x$] ()at(-2,2){};
\node[P,label=above:$y$] ()at(2.5,5.7){};
\node ()at(5.5,1){\small$cl(K)$};
\end{tikzpicture}
\end{center}

 Since both sets, $cl(K)$ and $A$ are convex,  $\overline{xy}\subset A\cap cl(K)$, in particular, $\overline{xy}\cap H=\varnothing$. Now assume that $L_A$ contains a point $w\in H$, and let $x\in \overline{wy}$. Because $H$ is open, 
 there is a small circular disk $D\subset H$ centered at $w$. Then $x$ is an interior point of $conv(D\cup\{y\})\subset H\subset K$, contradicting to $x\in\partial K$. Because 
 $L_A\cap H=\varnothing$ and $H$ is connected,  $H$ is on one side of $L_A$; hence $L_A$ is a supporting line to $cl(K)$.
 
 Let $z\in \overline{xy}$.  If $z\notin \partial K$, then $z\in cl(K)$ implies that $z$ is an interior point of $K$.   Then there is a point $z^\prime\in K$ sufficiently close to $z$ and on the side of  $L_A$  opposite to the one containing $H$, a contradiction. We obtain that $z\in \partial K$, for every $z\in\overline{xy}$. Because $L_A$ is a supporting line to $cl(K)$, for any choice of $x,y\in A\cap\delta K$, it follows that $A\cap \partial K$ is a line segment.\\
  
 Repeating the argument in part (c), we obtain that there are two more lines, $L_B$ and $L_C$,
 containing the segments $B\cap \partial K$  and
 $C\cap \partial K$, respectively, and supporting   $cl(K)$. 
  Therefore, 
  $cl(K)$ is contained in the intersection of the three halfplanes containing $H$ and bounded by $L_A$, $L_B$, and $L_C$.  Because each of $L_A\cap \partial K$, 
  $L_B\cap \partial K$, and $L_C\cap \partial K$
  is a line segment
  we obtain that $cl(K)$ is a triangle with vertices $p^*\in A\cap B$, $q^*\in B\cap C$, and $r^*\in C\cap A$.  By part (b),
every convex lid $M$ 
satisfies $cl(K)\subseteq M$, and hence  $p^*,q^*,r^*\in M$ follows.
\end{proof}

\begin{proposition} 
\label{H2}
If   $e$ and $e^\prime$ are blue edges in a convex red/blue clique with
$e\cap e^\prime=\{c\}$, then at least one edge in  
 the set
$\{f\;:\;|f\cap e|=2,|f\cap (e^\prime\setminus\{c\})|=1\}$ is blue.
\end{proposition}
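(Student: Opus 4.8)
The plan is to argue by contradiction, converting the putative red triples into a hole together with two lids on it, and then invoking the Lid Lemma (Lemma~\ref{hole}). Write $e=\{a_1,a_2,c\}$ and $e'=\{b_1,b_2,c\}$. Since $e\cap e'=\{c\}$, the five vertices $a_1,a_2,b_1,b_2,c$ are distinct, and the set $\{f : |f\cap e|=2,\ |f\cap(e'\setminus\{c\})|=1\}$ consists precisely of the six triples obtained by choosing two vertices from $\{a_1,a_2,c\}$ and one vertex from $\{b_1,b_2\}$. Let $A_1,A_2,B_1,B_2,C$ denote the compact convex sets these vertices represent, and suppose for contradiction that all six of those triples are red.

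First I would produce a hole. The three red triples $\{a_1,a_2,b_1\}$, $\{a_1,c,b_1\}$, $\{a_2,c,b_1\}$ say exactly that $B_1$ meets each of $A_1\cap A_2$, $A_1\cap C$, and $A_2\cap C$; in particular these three sets are non-empty, so $A_1,A_2,C$ are pairwise intersecting. Since $e$ is blue we have $A_1\cap A_2\cap C=\varnothing$, so $A_1,A_2,C$ form a hole, and the nonemptiness just observed is precisely the statement that $B_1$ is a convex lid on $A_1\cup A_2\cup C$. Running the same reasoning with $b_2$ in place of $b_1$ shows that $B_2$ is a convex lid on $A_1\cup A_2\cup C$ as well.

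Then I would close the argument with Lemma~\ref{hole}. Applied to the hole $A_1,A_2,C$, it supplies points $p^*\in A_1\cap A_2$, $q^*\in A_2\cap C$, $r^*\in C\cap A_1$ lying in every convex lid; in particular $q^*\in B_1\cap B_2$. But $q^*\in C$ as well, so $q^*\in B_1\cap B_2\cap C$, contradicting the assumption that $e'=\{b_1,b_2,c\}$ is blue. (The point $r^*$ would serve equally well.) Hence at least one of the six candidate triples is blue, which is the assertion.

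I do not anticipate a real obstacle: once the correct five sets are isolated, the argument is pure bookkeeping on top of Lemma~\ref{hole}. The one point worth checking carefully is that the pairwise intersections among $A_1,A_2,C$ — needed both to call the configuration a hole and to certify that $B_1$ and $B_2$ are lids — are handed to us for free by the assumed red triples, so the proposition requires no hypothesis beyond those already present; all of the geometric content is carried by the Lid Lemma.
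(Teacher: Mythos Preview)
Your proof is correct and follows essentially the same route as the paper's: assume all six candidate triples are red, observe that the sets representing $e$ form a hole while each of the two non-$c$ vertices of $e'$ represents a lid, and then invoke Lemma~\ref{hole} to place one of the special points $p^*,q^*,r^*$ in the triple intersection corresponding to $e'$. The paper's write-up is terser (it uses the labels $A,B,C$ and $A',B',C$ and picks out $r^*\in C\cap A$), but the argument is the same.
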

\begin{proof}
Let the vertices in $e$ and in $e^\prime$ be represented by
the convex sets  $A,B,C$ and  $A^\prime, B^\prime, C$, respectively. Suppose 
to the contrary that every edge
$f$ with  $|f\cap e|=2$ and $|f\cap (e^\prime\setminus\{c\})|=1$ is red. Then the sets
$A,B,C$ are pairwise intersecting. Furthermore, since $e$ is blue, they form a hole. 
Now apply Lemma \ref{hole} twice with $M=A^\prime$ and $B^\prime$. It follows that
$r^*\in A^\prime\cap B^\prime\cap C$, contradicting the assumption that $e^\prime$ is blue. 
\end{proof}

 \begin{figure}[http]
  \centering
 \tikzstyle{A} = [circle, draw=black!, minimum width=1pt, inner sep=10pt]
\tikzstyle{P} = [circle, draw=black!, minimum width=3pt, inner sep=1pt, fill=black]
\tikzstyle{txt}  = [circle, minimum width=1pt, draw=white, inner sep=0pt]
\begin{tikzpicture}[scale=.8,every node/.style={draw}] 
\tikzstyle{ann} = [draw=none,fill=none,right]
\tikzstyle{txt}  = [circle, minimum width=1pt, draw=white, inner sep=0pt]

\draw[blue, line width=.5pt,rotate=15] (2.5,.8) ellipse (2cm and .4cm);

\draw[blue, line width=.5pt,rotate=-22] (1.3,3.1) ellipse (2cm and .4cm);

 \node[P](1)at (1,3){};
 \node[P](2)at (2,2.5){};
  \node[P](a)at (.7,1){};
  \node[P](b)at (2,1.5){};
\node[P,label=right:$c$](c)at (3.35,1.85){};

  \node[txt]()at (3.2,1) {$e$}; 
  \node[txt]()at (3.2,2.9) {$e^\prime$};
  
\draw[red,line width=1pt] (c)--(b)--(a) ;
\draw[red] (2)--(a)--(1)--(b)--(2)--(c) ;
\draw[red,line width=1pt] (a) to[out=5,in=-135] (c);
\draw[red] (1) to[out=0,in=130] (c);
\end{tikzpicture}
\hskip2cm
\begin{tikzpicture}[scale=.8,every node/.style={draw}] 
\tikzstyle{ann} = [draw=none,fill=none,right]
\tikzstyle{txt}  = [circle, minimum width=1pt, draw=white, inner sep=0pt]

\draw[blue, line width=.5pt] (2,1) ellipse (2cm and .4cm);
 \node[P](a)at (.7,1){};\node[P](b)at (3.3,1){};\node[P](c)at (2,1){};

\draw[blue, line width=.5pt] (2,2.5) ellipse (2cm and .4cm);
 \node[P](1)at (.7,2.5){};
 \node[P](2)at (3.3,2.5){};
 \node[P](3)at (2,2.5){};

  \node[txt]()at (4.3,1) {$e$}; 
  \node[txt]()at (4.3,2.5) {$e^\prime$};
  
\draw[red] (a)--(1)--(b) (a)--(2)--(b) (a)--(3)--(b) ;
\draw[red,line width=1pt] (a)--(b)--(c);
\draw[red] (1)--(c) (2)--(c) (3)--(c) ;
\draw[red,line width=1pt] (a) to[out=-20,in=200] (b);
\end{tikzpicture}
\caption{}
\label{lemmas12}
\end{figure}

\begin{proposition} 
\label{H1}
If $e$ and $e^\prime$ are disjoint blue edges
in a convex red/blue clique then at least
one edge in the set $\{f\;:\;|f\cap e|=2,|f\cap e^\prime|=1\}$ is blue.
\end{proposition}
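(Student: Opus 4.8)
\section*{Proof proposal}

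The plan is to argue by contradiction, reusing the mechanism of Proposition \ref{H2}. Write $e=\{A,B,C\}$ and $e^\prime=\{A^\prime,B^\prime,C^\prime\}$, identifying the six distinct vertices with the compact convex sets they represent (these are six distinct sets since $e\cap e^\prime=\varnothing$), and suppose that \emph{every} edge $f$ with $|f\cap e|=2$ and $|f\cap e^\prime|=1$ is red. There are exactly nine such edges, namely $\{A,B,Z^\prime\},\{B,C,Z^\prime\},\{C,A,Z^\prime\}$ for $Z^\prime\in\{A^\prime,B^\prime,C^\prime\}$, and all nine are assumed red; we will derive a contradiction with the hypothesis that $e^\prime$ is blue. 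The geometric content is illustrated by the right-hand picture of Figure \ref{lemmas12}.

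First I would extract a hole from $e$. The edges $\{A,B,A^\prime\},\{B,C,A^\prime\},\{C,A,A^\prime\}$ are among the forbidden ones, hence red, which forces each of the pairs $A\cap B$, $B\cap C$, $C\cap A$ to be non-empty. Since $e$ is blue we have $A\cap B\cap C=\varnothing$, so $A,B,C$ form a hole in the plane, and Lemma \ref{hole} applies: there are points $p^*\in A\cap B$, $q^*\in B\cap C$, $r^*\in C\cap A$ that lie in every convex lid on $A\cup B\cup C$.

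Next I would verify that each of $A^\prime,B^\prime,C^\prime$ is such a lid. By definition this requires, for $A^\prime$, that $A^\prime\cap(A\cap B)\neq\varnothing$, $A^\prime\cap(B\cap C)\neq\varnothing$, and $A^\prime\cap(C\cap A)\neq\varnothing$ — and these are precisely the three red edges listed above. Replacing $A^\prime$ by $B^\prime$ (resp.\ by $C^\prime$) gives three further forbidden, hence red, edges, so $B^\prime$ and $C^\prime$ are lids as well. Applying Lemma \ref{hole} with $M=A^\prime$, $M=B^\prime$, and $M=C^\prime$ in turn, we get $p^*\in A^\prime$, $p^*\in B^\prime$, and $p^*\in C^\prime$ (indeed all of $p^*,q^*,r^*$ lie in $A^\prime\cap B^\prime\cap C^\prime$), so $A^\prime\cap B^\prime\cap C^\prime\neq\varnothing$, contradicting the assumption that $e^\prime$ is blue. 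This completes the proof.

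I do not expect a genuine obstacle: once Lemma \ref{hole} is available the argument is just the disjoint analogue of Proposition \ref{H2}, with three lids in place of two. The only care needed is the bookkeeping confirming that the nine relevant triples certify the lid condition for all three sets of $e^\prime$ and that one pairwise intersection (say $A\cap B$) of the hole is enough to locate the common point; both are immediate from the definitions.
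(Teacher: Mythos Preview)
Your argument is correct and coincides with the paper's first proof of Proposition~\ref{H1}: assume all nine mixed triples are red, deduce that $A,B,C$ form a hole, apply Lemma~\ref{hole} with $M=A',B',C'$ to place $p^*$ (equivalently $r^*$) in $A'\cap B'\cap C'$, and contradict the blueness of $e'$. The paper also records a second proof via Lov\'asz's colorful Helly theorem with $\mathcal C_1=\mathcal C_2=\{A,B,C\}$ and $\mathcal C_3=\{A',B',C'\}$, but your route is exactly the intended primary one.
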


Next we include two proofs of Proposition \ref{H1}. The first proof  uses the Lid lemma (Lemma \ref{hole}); the second one, showing a remarkable connection to Helly's theorem, applies the colorful Helly theorem due to Lov\'asz \cite{L}. It is stated as follows (see \cite{B} for a proof): Let $\mathcal{C}_i$, $i=1,\dots,d+1$, be  families of not necessarily distinct convex sets in $\R^d$; if for any choice of $d+1$ sets 
$K_i\in\mathcal{C}_i$, $i=1,\ldots, d+1$, we have $\bigcap\limits_{i=1}^{d+1} K_i\neq\varnothing$, then there is an index $j$, $1\leq j\leq d+1$,
such that $\bigcap\limits_{K\in \mathcal{C}_j} K\neq\varnothing$.

\begin{proof}[First proof of Proposition \ref{H1}] 
Let the vertices in $e$ and in $e^\prime$ be represented by
the convex sets  $A,B,C$ and  $A^\prime, B^\prime, C^\prime$, respectively. Suppose to the contrary that every edge
$f$ with  $|f\cap e|=2,|f\cap e^\prime|=1$ is red. Then the sets
$A,B,C$ are pairwise intersecting. Furthermore, since $e$ is blue, they form a hole. 
Now apply Lemma \ref{hole} three times with $M=A^\prime$, $B^\prime$, and $C^\prime$. It follows that
$r^*\in A^\prime\cap B^\prime\cap C^\prime$, contradicting the assumption that $e^\prime$ is blue. 
\end{proof}

\begin{proof}[Second proof of Proposition \ref{H1}] 
 The proposition follows by applying the colorful Helly theorem for $d=2$, $\mathcal{C}_1=\mathcal{C}_2=\{A,B,C\}$ and $\mathcal{C}_3=\{A^\prime,B^\prime,C^\prime\}$, where 
$A,B,C$ and 
$A^\prime, B^\prime, C^\prime$ represent  the vertices in $e$ and in $e^\prime$, respectively. Observe that any three vertices  corresponding to
a choice of $K_i\in\mathcal{C}_i$, $i=1,2,3$, define an edge $f$ of the red/blue clique such that
$|f\cap e|=2$ and $|f\cap e^\prime|=1$.  
If every such $f$ is red, the hypotheses of the colorful Helly theorem are satisfied.
Since $e$, $e^\prime$ are blue edges, $\bigcap\limits_{K\in \mathcal{C}_j} K=\varnothing$ follows for every $j=1,2,3$, contradicting the colorful Helly theorem.
\end{proof}

 The $3$--cycle, $C_3^{(3)}$,  and the chordless circular $k$-cycle $C_k^{(3)}$, for $k\geq 5$, are defined here as follows.
The {\it $3$-cycle}  $C_3^{(3)}$ has  five vertices and three pairwise intersecting  edges with no common vertex;
the {\it circular $k$-cycle}  $C_k^{(3)}$ has $k$ vertices labeled in a circular order and any three consecutive triples define an edge.

\begin{proposition}
\label{7cycle}
  A convex red/blue clique contains neither a blue $C_3^{(3)}$ nor a blue $C_k^{(3)}$, for $k\geq 6$.
\end{proposition}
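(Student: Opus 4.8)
The plan is to handle the two forbidden shapes separately, the engine in both cases being the Lid lemma (Lemma \ref{hole}) and its corollaries.

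\textbf{The case $C_3^{(3)}$.} I would first note that $C_3^{(3)}$ has an essentially unique shape: among three pairwise intersecting $3$-edges on five vertices with no common vertex, exactly one pair of edges meets in two vertices (three pairs meeting in a single vertex each would require six vertices, and two doubleton intersections cannot both fit in a $3$-set), so up to relabeling the blue edges are $\{1,2,3\}$, $\{1,2,4\}$, $\{3,4,5\}$ and all other triples on $\{1,\dots,5\}$ are red. Now apply Proposition \ref{H2} to the blue edges $e=\{3,4,5\}$ and $e'=\{1,2,3\}$, which share only $c=3$: it forces one of the edges $f$ with $|f\cap\{3,4,5\}|=2$ and $|f\cap\{1,2\}|=1$, i.e.\ one of $\{1,3,4\},\{2,3,4\},\{1,3,5\},\{2,3,5\},\{1,4,5\},\{2,4,5\}$, to be blue — impossible, since all six are red.

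\textbf{The case $C_k^{(3)}$, $k\ge 6$.} Write $v_1,\dots,v_k$ for the vertices in circular order, so the blue edges are precisely the consecutive triples $e_i=\{v_i,v_{i+1},v_{i+2}\}$ (subscripts mod $k$), all other triples on $\{v_1,\dots,v_k\}$ being red. First I would check that each $A_{v_i},A_{v_{i+1}},A_{v_{i+2}}$ forms a hole: $e_i$ blue gives empty triple intersection, while for any two of these indices a non-consecutive (hence red) triple extends them, supplying the three pairwise intersections. Fix the hole on $\{A_{v_1},A_{v_2},A_{v_3}\}$; a vertex $v_j$ represents a convex lid of it exactly when each pair from $\{v_1,v_2,v_3\}$ together with $v_j$ spans a red triple, and inspecting which triples are consecutive shows this occurs precisely for $j\in\{5,6,\dots,k-1\}$. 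By Lemma \ref{hole} there is a point $r^*\in A_{v_3}\cap A_{v_1}$ lying in every lid, so $r^*\in A_{v_j}$ for all such $j$; if $k\ge 8$ then $5,6,7$ are among these $j$, giving $r^*\in A_{v_5}\cap A_{v_6}\cap A_{v_7}$ and contradicting that $e_5$ is blue.

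\textbf{The main obstacle: $k=6,7$.} For these two values the hole on three consecutive sets keeps only one, resp.\ two, vertex-lids, so the shortcut above never reaches a blue triple; this narrow range is the real difficulty. Here I would apply Lemma \ref{hole} to two holes placed opposite one another around the cycle and combine the uniqueness clause of the Lid lemma with its part (b) — every lid contains the triangle $cl(K)$ — to match the ``corner'' points produced by the two holes and again force a common point of a blue triple. Alternatively, as $k=6,7$ give only a handful of explicit colorings, one can simply verify that the nerve of any representing family would then fail to be $2$-collapsible, which a $2$-representable complex cannot do. Either way the crux is that with so few sets a single hole retains too few lids, so these two cases must be closed by hand.
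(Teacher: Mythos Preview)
Your treatment of $C_3^{(3)}$ via Proposition~\ref{H2} and of $k\ge 8$ via the Lid lemma is correct and is exactly the paper's route (the paper packages the $k\ge 8$ step as Proposition~\ref{H1}, whose first proof is precisely your lid argument, applied to two disjoint blue edges with a gap on each side).

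For $k=6,7$ your ``two opposite holes'' idea is not a proof as it stands. With $k=6$ the hole on $\{v_1,v_2,v_3\}$ has only the single lid $v_5$, and the opposite hole $\{v_4,v_5,v_6\}$ has only the lid $v_2$; the uniqueness clause and part~(b) of Lemma~\ref{hole} then yield one triangle inside $A_{v_5}$ and another inside $A_{v_2}$, with no mechanism offered for matching their corners or forcing a common point of a blue triple. The situation for $k=7$ is similar (two lids per hole, still no three consecutive lids). Unless you can articulate a concrete contradiction here, this branch should be dropped.

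Your alternative---non-$2$-collapsibility---is the paper's actual argument for $k=6,7$ (indeed the paper notes it covers all $k\ge 6$). Two small corrections to your phrasing: there is nothing to enumerate, since for each $k$ the blue $C_k^{(3)}$ determines a single red/blue coloring; and what must be checked is that the \emph{red} simplicial complex (the complement of the cyclic blue triples) admits no elementary $2$-collapse, which follows because every vertex and every pair lies in at least two maximal red faces. Wegner's theorem then rules out $2$-representability.
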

\begin{figure}[htp]
\centering
 \tikzstyle{A} = [circle, draw=black!, minimum width=1pt, inner sep=10pt]
\tikzstyle{P} = [circle, draw=black!, minimum width=3pt, inner sep=1pt, fill=black]
\tikzstyle{txt}  = [circle, minimum width=1pt, draw=white, inner sep=0pt]

\begin{tikzpicture}[scale=.8]
\draw[blue, line width=1pt,rotate=15] 
(3.8,2) ellipse (1.8cm and .45cm);
\draw[blue, line width=.5pt,rotate=-12] 
(2.5,2.8) ellipse (1.6cm and .45cm);

\draw[blue, line width=1pt,rotate=-15] 
(3.8,3.5) ellipse (.45cm and 1.3cm);
       
\node[P](1)at(4.6,3.2){};         
         \node[P](3)at (4.25,2){};      
     \node[P](5)at (2.1,2.5) {};       
      \node[P](4)at (2.6,2.5){};  
      \node[P](2)at (4.8,2.5){}; 
 \node[txt]()at (3,.9) {$C_3^{(3)}$};
  \node[txt]()at (3.5,3.8) {$e^\prime$};
  \node[txt]()at (5.4,3) {$e$};
\end{tikzpicture}
\hskip2cm
\begin{tikzpicture}[scale=.8]
\draw[blue, line width=.5pt,rotate=45] 
(4.2,1.2) ellipse (1.4cm and .45cm);
\draw[blue, line width=.5pt,rotate=-45] 
(0,5.4) ellipse (1.5cm and .45cm);
\draw[blue, line width=.5pt,rotate=5] 
(2.2,2.8) ellipse (.5cm and 1.4cm);
\draw[blue, line width=.5pt,rotate=-5] 
(3.75,3.3) ellipse (.5cm and 1.4cm);

 \draw [blue,line width=.5pt,rounded corners=5mm] 
 (1.5,3.8)--(3,4.9)--(4.5,3.8)--cycle
 (1.7,1.8)--(1.2,3.5)--(4.5,1.85)--cycle 
 (1.3,1.75)--(4.8,3.5)--(4.3,1.6)--cycle;
 
\node[P](0)at(3,4.5){}; 
\node[P](6)at(2,4){};
 \node[P](1)at(4,4){};     
 \node[P](5)at (1.6,3) {};       
  \node[P](2)at (4.4,3){};  
  \node[P](4)at (2.25,2){};  
\node[P](3)at (3.75,2){}; 

 \node[txt]()at (3,.9) {$C_7^{(3)}$};
 
\end{tikzpicture}

\caption{}
\label{obst}
\end{figure}

\begin{proof}  
Apply Proposition \ref{H2} to the edges $e, e^\prime$ of $C_3^{(3)}$ in Fig.\ref{obst}; besides the edges of $C_3^{(3)}$ there must exist another blue edge. 
 For $k\geq 8$, let 
$e, e^\prime$ be disjoint edges of $C_k^{(3)}$ separated by at least one vertex on each side, 
 then apply Proposition \ref{H1}.  Besides the edges of $C_k^{(3)}$ there must exist another blue edge. 
The cases $k=6$ and $7$ follow from the more general statement 
that $C_k^{(3)}$ is not $2$-collapsible for $k\geq 6$.

A face $F$ of a simplicial complex is $d$-collapsible if it is contained by a unique maximal face
of dimension less than $d$. The removal of  $F$ and all faces containing $F$ is called an elementary collapse. A simplicial complex is $d$-collapsible if there is a sequence of elementary collapses ending with an empty complex (for more details and examples see \cite{T}).  Wegner \cite{W} proved  that $d$-representable simplicial complexes are
$d$-collapsible. 
The red subhypergraph in the complement of a chordless circular blue $C_k^{(3)}$
has no elementary $2$-collapse, since every vertex and every edge belongs to more than one maximal red clique, for $k\geq 6$.
\end{proof}

\section{The seven convex sets theorem}
\label{7convex}
In this section we prove the following theorem.
\begin{theorem} 
\label{k=4} 
If a convex red/blue $K_7^{(3)}$ 
contains no red   $K_5^{(3)}$,  then there exist two vertices
such that every red $K_4^{(3)}$ contains at least one of them.
\end{theorem}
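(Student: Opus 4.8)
The plan is to work entirely in the hypergraph model and argue by contradiction: assume the convex red/blue $K_7^{(3)}$ has no red $K_5^{(3)}$ but no two vertices cover all red $K_4^{(3)}$'s. First I would gather the structural facts we already have. Since there is no red $K_5^{(3)}$, every $5$-subset of the $7$ vertices contains a blue edge; this is a strong density condition on the blue edges. On the other hand, Propositions \ref{H2}, \ref{H1} and \ref{7cycle} forbid certain blue configurations: in particular, there is no blue $C_3^{(3)}$, no blue $C_k^{(3)}$ for $k\geq 6$, and the propagation rules say that blue edges cannot be "isolated" in the ways described. The first real step is to understand what the blue hypergraph $\mathcal B$ on $7$ vertices can look like: it must hit every $5$-set, yet avoid the forbidden blue subconfigurations. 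I expect this forces the blue edges to be relatively few and to sit in a rigid pattern (for instance, one should be able to show that no vertex lies in "too many" blue edges, and that the red $K_4^{(3)}$'s are plentiful).

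The second step is to pin down the red $K_4^{(3)}$'s and the hypothetical "no $2$-transversal" assumption. If no two vertices meet all red $K_4^{(3)}$'s, then for every pair $\{u,v\}$ there is a red $K_4^{(3)}$ avoiding both, i.e. living inside the remaining $5$ vertices. Combined with the absence of a red $K_5^{(3)}$, each such $5$-set then carries a red $K_4^{(3)}$ \emph{and} a blue edge, and these must fit together on only $5$ points — a red $4$-clique plus a blue triple inside a $K_5^{(3)}$. I would enumerate, up to symmetry, how a blue triple can interact with a red $K_4^{(3)}$ on five common vertices (the blue triple shares $0$, $1$, or $2$ vertices with the $4$-clique), and use Proposition \ref{H2}/\ref{H1} to push additional blue edges out of each such local picture. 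The aim is to show that the "no $2$-transversal" hypothesis forces so many blue edges, spread around so uniformly, that some forbidden blue configuration (a blue $C_3^{(3)}$ or a blue $6$- or $7$-cycle, or a blue violation of \ref{H2}) is unavoidable.

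Concretely, I would try to show that the blue hypergraph, if it hits every $5$-set, must contain either two blue edges meeting in a single vertex or two disjoint blue edges; then Proposition \ref{H2} or \ref{H1} produces a third blue edge "between" them, and iterating this propagation on $7$ vertices (where there is little room) should close a blue cycle $C_3^{(3)}$, $C_6^{(3)}$, or $C_7^{(3)}$, contradicting Proposition \ref{7cycle}. Alternatively, one can phrase the endgame via collapsibility: a convex red/blue $K_7^{(3)}$ is $2$-collapsible (by Wegner's theorem, as used in the proof of Proposition \ref{7cycle}), and I would look for a configuration of red $K_4^{(3)}$'s with no common pair that obstructs every elementary $2$-collapse of the red subcomplex.

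I expect the main obstacle to be the case analysis in the second and third steps: classifying the possible blue hypergraphs on $7$ vertices that are "$5$-set covering" yet contain none of the forbidden blue subconfigurations, and then, for each surviving candidate, either exhibiting a $2$-transversal of its red $K_4^{(3)}$'s directly or deriving a contradiction. Managing the symmetry of $K_7^{(3)}$ to keep this finite check small — and making sure the propagation from \ref{H2} and \ref{H1} is applied systematically rather than ad hoc — will be where the real work lies.
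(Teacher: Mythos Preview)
Your proposal is essentially on the same track as the paper's proof, and would work if carried out. The paper organizes the argument a bit more cleanly than you sketch: rather than mixing the propagation from Propositions~\ref{H2}--\ref{H1} with the transversal hypothesis, it first rephrases ``blue hits every $5$-set'' as $\tau(H)\geq 3$ for the blue hypergraph $H$, observes that a blue $C_3^{(3)}$ already yields a $2$-transversal (the two vertices outside it --- this is Proposition~\ref{C3}, a purely combinatorial fact, not just a convexity obstruction), and then gives a complete combinatorial classification of all $C_3^{(3)}$-free $3$-uniform hypergraphs on $7$ vertices with $\tau\geq 3$: there are exactly $13$ (Propositions~\ref{tauK4} and~\ref{taunoK4}). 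Only after this classification is convexity invoked, to eliminate seven of the thirteen via Propositions~\ref{H1}, \ref{7cycle}, and the derived Proposition~\ref{V}; the remaining six all contain $A=K_4^{(3)}+K_3^{(3)}$ as a blue subhypergraph, for which a $2$-transversal is exhibited directly. So your anticipated ``main obstacle'' --- the finite classification --- is indeed the heart of the proof, but the paper separates it entirely from the geometric input, which keeps the case analysis under control. Your propagation idea is exactly how Proposition~\ref{V} is obtained and used, but note that propagation does not always force a forbidden cycle: in the $A$-containing cases the blue hypergraph is perfectly realizable, and one simply reads off the transversal.
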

The proof of Theorem \ref{k=4} appears at the end of the section. First we develop some tools.
We say that a red/blue clique contains a blue partial hypergraph $H$, provided that a subset of blue edges is isomorphic to $H$.
Let $V$ be the vertex set of a red/blue $K_7^{(3)}$.
A $2$-vertex set $\{x,y\}\subset V$   will be called a {\it $K_4^{(3)}$-transversal}
if all  copies of a red $K_4^{(3)}$ in 
$K_7^{(3)}$ contains at least one of $x$ or $y$.
\begin{proposition}\label{C3}
If an arbitrary (not necessarily convex)  red/blue $K_7^{(3)}$ contains a blue  $C_3^{(3)}$, then
there exists a $K_4^{(3)}$-transversal.
\end{proposition}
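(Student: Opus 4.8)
The plan is to argue purely combinatorially about the blue $C_3^{(3)}$ that, by hypothesis, sits inside our red/blue $K_7^{(3)}$. Recall that a $C_3^{(3)}$ uses five vertices: there are three ``tip'' vertices $a,b,c$ and two ``shared'' vertices $u,v$, with the three blue edges being $\{a,u,v\},\{b,u,v\},\{c,u,v\}$ — no wait, that is not right for a $C_3^{(3)}$; rather the three pairwise intersecting edges with no common vertex on five vertices are $\{u,v,a\}$, $\{v,w,b\}$, $\{w,u,c\}$ where $u,v,w$ are the three pairwise-shared vertices and $a,b,c$ are the three private ones. I would first fix this labeling explicitly, writing $V=\{u,v,w,a,b,c,z\}$ where $z$ is the unique vertex outside the $C_3^{(3)}$, and the three blue edges are $e_1=\{u,v,a\}$, $e_2=\{v,w,b\}$, $e_3=\{w,u,c\}$.

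Next I would claim that $\{u,v,w\}$ meets every red $K_4^{(3)}$, and more precisely that \emph{some two} of $u,v,w$ already form a $K_4^{(3)}$-transversal. The first half is immediate: a red $K_4^{(3)}$ on four vertices contains all four of its triples as red edges, so it can contain at most two of $\{u,v,w\}$ — if it contained all three, it would contain one of $e_1,e_2,e_3$ as a red edge, contradicting blueness. Hence every red $K_4^{(3)}$ is disjoint from at least one of $u,v,w$. The real work is to upgrade ``each red $K_4^{(3)}$ misses at least one of $u,v,w$'' to ``there is a \emph{single} pair among $\{u,v\},\{v,w\},\{w,u\}$ hitting all of them.''

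For that upgrade I would do a short case analysis on which red $K_4^{(3)}$'s can occur. A red $K_4^{(3)}$ disjoint from, say, $w$ must live inside $\{u,v,a,b,c,z\}$, i.e. on four of these six vertices; similarly for the ones disjoint from $u$ (inside $\{v,w,b,c,a,z\}$) and from $v$. Suppose, for contradiction, that no pair works; then there is a red $K_4^{(3)}$ $R_w$ avoiding $w$ but \emph{not} avoiding $u$ and not avoiding $v$ (else $\{v,w\}$ or $\{u,w\}$ would be a transversal once we also handle the symmetric cases), hence $\{u,v\}\subset R_w$ and $R_w=\{u,v,s,t\}$ with $s,t\in\{a,b,c,z\}$; in particular $\{u,v,s\}$ and $\{u,v,t\}$ are red edges. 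Symmetrically there is a red $K_4^{(3)}$ containing $\{v,w\}$ and one containing $\{w,u\}$. I would then extract from these three red $K_4^{(3)}$'s enough red triples on the seven vertices to contradict the ban on a red $K_5^{(3)}$ — the key point being that $e_1=\{u,v,a\}$ is blue, so $a\notin\{s,t\}$, which forces $\{s,t\}\subseteq\{b,c,z\}$, and running the same restriction through all three symmetric red $K_4^{(3)}$'s pins the fourth vertices down so tightly that two of these $K_4^{(3)}$'s together with a shared vertex yield five mutually ``red-linked'' vertices spanning a red $K_5^{(3)}$, the desired contradiction.

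The main obstacle I anticipate is exactly this last bookkeeping step: making sure the case analysis is genuinely exhaustive and that in every surviving case the red edges forced by the several red $K_4^{(3)}$'s really do assemble into a red $K_5^{(3)}$ (or otherwise directly produce one of the three pairs as a transversal). One has to be careful that the private vertices $a,b,c$ and the outside vertex $z$ are treated on equal footing except for the blueness constraints $a\notin e_2\cup e_3$-incident red $K_4^{(3)}$'s in the obvious way, and that no red $K_4^{(3)}$ can simultaneously avoid two of $u,v,w$ without immediately giving a transversal. I would organize the argument so that the generic branch ``each of $\{u,v\},\{v,w\},\{w,u\}$ is the unique pair inside some red $K_4^{(3)}$'' is the only one needing the $K_5^{(3)}$-free hypothesis, and all other branches exhibit a transversal outright.
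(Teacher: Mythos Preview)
Your argument has a structural error at the very start that derails everything after it. A $C_3^{(3)}$ lives on \emph{five} vertices, not six: three $3$-element edges on five vertices with pairwise nonempty intersections and empty triple intersection force the intersection sizes to be $1,1,2$ (by inclusion--exclusion, $5=9-\sum|e_i\cap e_j|+0$), so one pair of edges shares two vertices. A correct labeling is $e_1=\{1,2,3\}$, $e_2=\{1,2,4\}$, $e_3=\{3,4,5\}$. Your configuration $\{u,v,a\},\{v,w,b\},\{w,u,c\}$ uses six vertices and is not a $C_3^{(3)}$ in the paper's sense.

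Once the vertex count is right, the proof is a one-liner and needs no case analysis: let $X$ be the $5$-element vertex set of the blue $C_3^{(3)}$; then $V\setminus X$ has exactly two vertices and is the desired transversal. Indeed, any $4$-subset $S\subset X$ omits a single vertex $v\in X$, and since the three blue edges have no common vertex, at least one of them avoids $v$ and hence lies inside $S$. Thus no red $K_4^{(3)}$ can sit entirely inside $X$, so every red $K_4^{(3)}$ meets $V\setminus X$. This is exactly the paper's proof.

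Note also that your plan invokes the hypothesis ``no red $K_5^{(3)}$'' to derive a contradiction, but Proposition~\ref{C3} carries no such hypothesis --- it is stated for an \emph{arbitrary} red/blue $K_7^{(3)}$ containing a blue $C_3^{(3)}$. So even if your six-vertex picture were the right object, the final step of your outline would be appealing to an assumption you do not have.
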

 \begin{proof} Let $X=e_1\cup e_2\cup e_3$, where 
 $e_1,e_2, e_3$ are the edges of the blue $C_3^{(3)}$.
 The set $V\setminus X$
is a $K_4^{(3)}$-transversal, since every red 
$K_4^{(3)}$ contains at most two vertices from $X$.
\end{proof}

\begin{proposition}\label{V}
Let $e_1, e_2$ be blue edges of a convex red/blue $K_7^{(3)}$ and suppose that $e_1\cap e_2=\{c\}$. If  $K_7^{(3)}$ does not have 
a $K_4^{(3)}$-transversal, then every blue edge $e_3\subset e_1\cup e_2$ contains $c$.
\end{proposition}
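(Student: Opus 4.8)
The plan is to argue by contradiction: assume $K_7^{(3)}$ has no $K_4^{(3)}$-transversal, fix blue edges $e_1,e_2$ with $e_1\cap e_2=\{c\}$, and suppose some blue edge $e_3\subseteq e_1\cup e_2$ omits $c$. I would first pin down the combinatorial position of $e_3$. Writing $e_1=\{c,a_1,b_1\}$ and $e_2=\{c,a_2,b_2\}$, the four vertices $a_1,b_1,a_2,b_2$ are distinct and together with $c$ exhaust $e_1\cup e_2$ (five vertices); since $e_3$ is a $3$-subset of $e_1\cup e_2$ avoiding $c$, it must be a $3$-subset of $\{a_1,b_1,a_2,b_2\}$, so $e_3$ meets exactly one of $e_1\setminus\{c\}$, $e_2\setminus\{c\}$ in a single vertex and the other in two vertices — say $|e_3\cap e_1|=1$ and $|e_3\cap(e_2\setminus\{c\})|=2$, i.e. $e_3\supseteq e_2\setminus\{c\}$. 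The key observation is then that $e_2$ and $e_3$ are two blue edges sharing exactly the pair $e_2\setminus\{c\}$, and $e_1\ni c$ is an edge with $|e_1\cap e_3|=1$, $|e_1\cap e_2|=1$.

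Next I would extract a blue $C_3^{(3)}$ and invoke Proposition \ref{C3}. The natural candidate is the triple of blue edges $e_1,e_2,e_3$: one checks they are pairwise intersecting (each pair meets in a single vertex: $e_1\cap e_2=\{c\}$, $e_2\cap e_3$ and $e_1\cap e_3$ are single vertices from $\{a_1,b_1,a_2,b_2\}$) and have no common vertex (no vertex lies in all three, since $c\notin e_3$ and the vertex of $e_1\cap e_3$ lies in neither $e_2$ nor, as a singleton intersection, can coincide with $e_1\cap e_2$). Their union $e_1\cup e_2\cup e_3=\{c,a_1,b_1,a_2,b_2\}$ has five vertices, so this is precisely a blue $C_3^{(3)}$ on five vertices. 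Proposition \ref{C3} now yields a $K_4^{(3)}$-transversal (namely the two vertices of $V$ outside this $5$-set), contradicting the assumption and finishing the proof. I would note that convexity is not actually needed for this argument — only Proposition \ref{C3}, which itself is purely combinatorial — so the hypothesis "convex" in the statement is there only for uniformity.

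The step I expect to be most delicate is verifying the "no common vertex" condition for the $C_3^{(3)}$, because one must rule out the degenerate possibility that the singleton intersections collapse onto each other. Concretely, suppose $e_1\cap e_3=\{v\}$ with $v$ also in $e_2$; then $v\in e_1\cap e_2=\{c\}$, so $v=c$, but $c\notin e_3$ — contradiction. A symmetric check handles $e_2\cap e_3$. I would also double-check that $e_3$ genuinely has the claimed form: a priori $e_3$ could be $\{a_1,b_1,a_2\}$-type, meeting $e_1\setminus\{c\}$ in two vertices and $e_2\setminus\{c\}$ in one; the two cases are symmetric under swapping $e_1\leftrightarrow e_2$, so the argument goes through verbatim after relabeling. (If instead one wanted to avoid this case split, one could observe directly that any blue $e_3\subseteq\{a_1,b_1,a_2,b_2\}$ together with $e_1,e_2$ forms a blue $C_3^{(3)}$, since those three edges always pairwise intersect in singletons and share no common vertex.)
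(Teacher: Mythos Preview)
Your argument is correct and follows the same route as the paper: a blue $e_3\subset e_1\cup e_2$ with $c\notin e_3$ makes $\{e_1,e_2,e_3\}$ a blue $C_3^{(3)}$, whence Proposition~\ref{C3} supplies the forbidden transversal. One harmless slip to fix: in your chosen case $e_3\supseteq e_2\setminus\{c\}$, so $|e_2\cap e_3|=2$ (not a singleton as you write in the verification paragraph), but since $C_3^{(3)}$ only requires three pairwise intersecting edges on five vertices with no common vertex, the conclusion is unaffected; your remark that convexity is not actually needed is also correct.
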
 
\begin{proof} By Proposition \ref{H2}, there is a blue edge
$e_3\subset e_1\cup e_2$ satisfying $|e_3\cap e_1|=2$ and
$|e_3\cap (e_2\setminus\{c\})|=1$. If $c\notin e_3$, then 
$e_1, e_2, e_3$ form a blue $3$-cycle. Then, by Proposition \ref{C3}, there is a $K_4^{(3)}$-transversal; thus the contradiction implies $c\in e_3$.
\end{proof}

The {\it transversal number} $\tau=\tau(H)$ of a hypergraph $H=(V,E)$ is defined as
the minimum cardinality of a set $T\subset V$ such that $e\cap T\neq\varnothing$, for every edge $e\in E$. 

\begin{figure}[htp]
 \centering
  \tikzstyle{A} = [circle, draw=black!,minimum width=2pt, inner sep=2.5pt]
\tikzstyle{P} = [circle,draw=black, minimum width=3pt, inner sep=1pt, fill=black]
\tikzstyle{txt}  = [circle, minimum width=1pt, draw=white, inner sep=0pt]

\begin{tikzpicture}[scale=.1]

  \node[P,label=below:{\small$3$}](3)at (0,0){}; 
   \node[P,label=right:{\small$2$}](2)at (3,5){};
      \node[P,label=above:{\small$1$}](1)at (0,10){};      
    \node[P,label=left:{\small$0$}](0)at (-3,5){};   
          
     \node[P,label=right:$4$](a)at (12,12){};          
     \node[P,label=right:$5$](b)at (12,5){}; 
      \node[P,label=right:$6$](c)at (12,-2){}; 
        
  \draw[]  (0,5) ellipse (5 and 6.7);
 \draw[]  (12,5) ellipse (1.5 and 10);
 
  \node[txt](A)at (7,-10){A}; 
  \node[txt](K4)at (0,19){K$_0$};
  \node[txt](K3)at  (12,19){e$_0$};
\end{tikzpicture}
\hskip1cm
\begin{tikzpicture}[scale=.1]
             \node[P,label=above:$$](1)at (0,22){};  
   \node[P,label=above:$$](2)at (3,27){};
      \node[P,label=above:$$](3)at (0,32){};      
    \node[P,label=above:$$](4)at (-3,27){};   
          
     \node[P,label=above:$$](a)at (12,34){};          
     \node[P,label=above:$$](b)at (12,27){}; 
      \node[P,label=above:$$](c)at (12,20){}; 
        
        \draw[]      (2)--(a)--(b)--(2);
  \draw[]  (0,27) ellipse (6 and 7);
 \draw[]  (12,27) ellipse (2 and 10);
 
  \node[P,label=above:$$](1)at (0,0){}; 
   \node[P,label=above:$$](2)at (3,5){};
      \node[P,label=above:$$](3)at (0,10){};      
    \node[P,label=above:$$](4)at (-3,5){};   
          
     \node[P,label=above:$$](a)at (12,12){};          
     \node[P,label=above:$$](b)at (12,5){}; 
      \node[P,label=above:$$](c)at (12,-2){}; 
        
               \draw[]      (2)--(a)--(b)--(2);
        \draw[]      (2)--(c)--(b)--(2); 
  \draw[]  (0,5) ellipse (6 and 7);
 \draw[]  (12,5) ellipse (2 and 10);
 
   \node[txt](A4)at (20,22){A$_1$}; 
 \node[txt](A4)at (20,0){A$_3$}; 
  \end{tikzpicture}
\hskip.5cm
\begin{tikzpicture}[scale=.1]
   \node[P,label=above:$$](1)at (0,22){};  
   \node[P,label=above:$$](2)at (3,27){};
      \node[P,label=above:$$](3)at (0,32){};      
    \node[P,label=above:$$](4)at (-3,27){};   
          
     \node[P,label=above:$$](a)at (12,34){};          
     \node[P,label=above:$$](b)at (12,27){}; 
      \node[P,label=above:$$](c)at (12,20){}; 
      
         \draw[]      (2)--(a)--(b)--(2);
        \draw[]      (1)--(c)--(b)--(1);  
  \draw[]  (0,27) ellipse (6 and 7);
 \draw[]  (12,27) ellipse (2 and 10);

  \node[P,label=above:$$](1)at (0,0){}; 
   \node[P,label=above:$$](2)at (4,5){};
      \node[P,label=above:$$](3)at (0,10){};      
    \node[P,label=above:$$](4)at (-3,5){};   
          
     \node[P,label=above:$$](a)at (8.6,10){};          
     \node[P,label=above:$$](b)at (10,5){}; 
      \node[P,label=above:$$](c)at (8.6,0){}; 
        
  \draw[]  (0,5) ellipse (6 and 7);
 \draw[]  (7,5) ellipse (5 and 9);
 
  \node[txt](A4)at (20,22){A$_2$}; 
 \node[txt](A4)at (20,0){A$_4$}; 
  \node[txt](K4)at (16,11){$K_4^{(3)}$};
   \end{tikzpicture}
\hskip1cm
\begin{tikzpicture}[scale=.1]

  \node[P,label=above:$$](1)at (0,-1){}; 
   \node[P,label=above:$$](2)at (3,5){};
      \node[P,label=above:$$](3)at (0,10){};      
    \node[P,label=above:$$](4)at (-3,5){};   
          
     \node[P,label=above:$$](a)at (12,12){};          
     \node[P,label=above:$$](b)at (15,5){}; 
      \node[P,label=above:$$](c)at (12,-2){}; 
        
    \draw[]      (3)--(a)--(b)--(3);
        \draw[]      (1)--(c)--(b)--(1);  
          \draw[]      (1,5)--(13,-4)--(13,14)--(1,5);  
  \draw[] (0,5) ellipse (6 and 8);
 \draw[] (12.5,5) ellipse (4 and 11);
 
 \node[txt](B)at (7,-10){B}; 
   \node[txt](K4)at (0,17){K$_0$};
    \end{tikzpicture}
   \hskip.5cm
\begin{tikzpicture}[scale=.1]

  \node[P,label=above:$$](1)at (0,-1){}; 
   \node[P,label=above:$$](2)at (3,5){};
      \node[P,label=above:$$](3)at (0,10){};      
    \node[P,label=above:$$](4)at (-3,5){};   
          
     \node[P,label=above:$$](a)at (12,12){};          
     \node[P,label=above:$$](b)at (16,5){}; 
      \node[P,label=above:$$](c)at (12,-2){}; 
        
  \draw[] (0,5) ellipse (6 and 8);
       
    \draw[]      (3)--(a)--(b)--(3);
        \draw[]      (1)--(c)--(b)--(1);  
          \draw[]      (1,5)--(13,-4)--(13,14)--(1,5); 
 
 \node[txt](AB-)at (7,-10){B$^-$}; 
   \end{tikzpicture}
\vskip.5cm
\caption{}
\label{Aderiv}
\end{figure}

\begin{proposition}
\label{tauK4}
There are seven  $3$-uniform hypergraphs on $7$ vertices with $\tau\geq 3$ that  do not contain $C_3^{(3)}$, but do contain $K_4^{(3)}$:  the hypergraph $A=K_4^{(3)}+K_3^{(3)}$, four extensions of $A$, and the hypergraphs $B$, $B^-$ (see Fig.\ref{Aderiv}).
\end{proposition}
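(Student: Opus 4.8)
\noindent\emph{Proof plan.} The plan is to fix one copy of $K_4^{(3)}$ inside $H$, on a vertex set $K_0$, and to analyze the possible edges of $H$ relative to the three remaining vertices $e_0$, sorting them by the number of their vertices lying in $K_0$. The four edges of type $(3,0)$ are exactly those of the chosen $K_4^{(3)}$, and the only conceivable edge of type $(0,3)$ is $e_0$ itself. The crucial first step is that $H$ has no edge of type $(2,1)$: if $\{i,j,x\}$ with $i,j\in K_0$ and $x\in e_0$ were one, then, writing $K_0=\{i,j,k,\ell\}$, the edges $\{i,k,\ell\}$ and $\{j,k,\ell\}$ of the chosen $K_4^{(3)}$ together with $\{i,j,x\}$ are three pairwise intersecting triples on the five vertices $\{i,j,k,\ell,x\}$ with empty common intersection, i.e.\ a $C_3^{(3)}$, which is forbidden. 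So the edge set of $H$ consists of the four $K_4^{(3)}$-edges, possibly $e_0$, and a set $S$ of \emph{crossing} edges, each with exactly one vertex in $K_0$ and two in $e_0$.

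Next I would characterize when $H$ contains a $C_3^{(3)}$. Up to isomorphism, a $C_3^{(3)}$ is the unique triple system $\bigl\{\{a,b,c\},\{a,b,d\},\{c,d,e\}\bigr\}$ on five vertices (a short degree count forces this: the degree sequence must be $(2,2,2,2,1)$, and the rest is determined), so one only needs to inspect how three edges of $H$ — drawn from the $K_4^{(3)}$-edges, $e_0$, and the crossing edges — can realize that pattern. Using that any two $K_4^{(3)}$-edges meet in exactly two vertices, that $e_0$ is disjoint from every $K_4^{(3)}$-edge, and the no-type-$(2,1)$ fact above, the case list collapses to two: $H$ contains a $C_3^{(3)}$ if and only if $S$ contains either two crossing edges $\{i,x,y\}$, $\{j,x,y\}$ with $i\neq j$ in $K_0$ (these, with the $K_4^{(3)}$-edge $\{i,j,k\}$, form a $C_3^{(3)}$), or three crossing edges $\{i,x,y\}$, $\{i,x,z\}$, $\{j,y,z\}$ with $i\neq j$ and $\{x,y,z\}=e_0$. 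Similarly, $\tau(H)\geq 3$ is equivalent — again because $H$ has no type-$(2,1)$ edge — to: every pair of vertices of $K_0$ is disjoint from some edge of $H$, that is, either $e_0\in H$, or at least three vertices of $K_0$ lie on crossing edges.

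It then remains to split on whether $e_0\in H$. If $e_0\in H$, the transversal condition holds automatically, and one lists, up to $\mathrm{Sym}(K_0)\times\mathrm{Sym}(e_0)$, the sets $S$ avoiding the two forbidden patterns; organizing the count by how many vertices of $K_0$ carry a crossing edge — and using that at most three crossing edges sit on a single vertex of $K_0$ — produces exactly six hypergraphs: $S=\varnothing$ (this is $A$); one crossing edge; two crossing edges on one vertex of $K_0$; three crossing edges on one vertex of $K_0$ (a second $K_4^{(3)}$ sharing a vertex with the first); two crossing edges on two different vertices of $K_0$; and the three-edge pattern spread over three vertices of $K_0$ (this is $B$). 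The four middle ones are the four extensions of $A$ in Figure~\ref{Aderiv}. If $e_0\notin H$, the transversal condition forces at least three vertices of $K_0$ to carry crossing edges, while the first forbidden pattern forces crossing edges on distinct vertices of $K_0$ to use distinct pairs of $e_0$; since there are only three such pairs, $S$ must consist of exactly three crossing edges, on three distinct vertices of $K_0$, using all three pairs of $e_0$ — this is $B^-$. The seven hypergraphs so obtained are pairwise non-isomorphic (as edge counts and degree sequences show), and a routine check confirms each has $\tau\geq 3$, contains $K_4^{(3)}$, and avoids $C_3^{(3)}$.

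The main obstacle is the middle step: proving that the two displayed patterns are the \emph{only} ways a $C_3^{(3)}$ can arise, and then performing the finite enumeration of admissible sets $S$ without omissions. Every single verification is immediate, but there are a fair number of configurations to run through — three edges chosen among three types, crossed with the internal structure of $C_3^{(3)}$ — so the real effort lies in keeping the bookkeeping honest; the no-type-$(2,1)$-edge observation is exactly what makes this list short and closes off the dangerous cases.
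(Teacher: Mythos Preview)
Your proposal is correct and follows essentially the same approach as the paper: fix the $K_4^{(3)}$ on $K_0$, split on whether $e_0$ is an edge, and enumerate the admissible crossing edges under the $C_3^{(3)}$-avoidance constraint. You are in fact more explicit than the paper on one point---the paper tacitly treats every additional edge as having two vertices in $e_0$ without saying why type-$(2,1)$ edges are excluded, whereas you supply that argument; your pattern~(b) is exactly the paper's observation that $\{2,4,5\},\{2,5,6\},\{3,4,6\}$ form a $C_3^{(3)}$, used to rule out the case of three crossing edges meeting $K_0$ in exactly two vertices.
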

\begin{proof} Let $H=(V,E)$ be a $3$-uniform hypergraph with $|V|=7,$ $\tau\geq 3$, and containing a $4$-clique $K_0\subset H$. Label the vertices of $K_0$ with $0,1,2,3$ and let $4,5,6$ be the labels of the vertices not in $K_0$. 

Assume first that  $e_0=\{4,5,6\}$ is an edge of $H$, which implies $\tau(H)=3$. If $H$ has no more edges, then it has no $3$-cycle either, hence $H\cong A$. Next we add edges to $A$ with avoiding a $3$-cycle. Each additional edge 
has at least one vertex in $K_0$. Furthermore, because $C_3^{(3)}$ must be avoided, no two additional edges share a common pair in $e_0$. Adding one or two edges to $A$ we obtain that $H\cong A_i$ for $i=1,2$ or $3$. 

 Observe  that $\{2,4,5\}$, $\{2,5,6\},$ and $\{3,4,6\}$ form a $3$-cycle on vertex set $\{2,3,4,5,6\}$. Therefore, when 
three edges are added to $A$,  either they have a common vertex in $K_0$ producing $H\cong A_4$, or they intersect $K_0$ in three distinct vertices, which leads to $H\cong B$.

Assume now that $\{4,5,6\}$ is not an edge of $H$.  There are two vertices of $K_0$ that form a $2$-vertex transversal set of $H$, unless at least  three vertices of $K_0$  are covered by some edge not in $K_0$.  This leads to $H\cong B^-$.
\end{proof}

\begin{figure}[htp]
\centering
  \tikzstyle{A} = [circle, draw=black!,minimum width=2pt, inner sep=2.5pt]
\tikzstyle{P} = [circle, draw=black!, minimum width=3pt, inner sep=1pt, fill=black]
\tikzstyle{txt}  = [circle, minimum width=1pt, draw=white, inner sep=0pt]
\begin{tikzpicture}[scale=.5]
\draw[line width=.5pt,rotate=45] 
(4.2,1.2) ellipse (1.4cm and .45cm);
\draw[line width=.5pt,rotate=-45] 
(0,5.4) ellipse (1.5cm and .45cm);
\draw[line width=.5pt,rotate=5] 
(2.2,2.8) ellipse (.5cm and 1.4cm);
\draw[line width=.5pt,rotate=-5] 
(3.75,3.3) ellipse (.5cm and 1.4cm);

 \draw [line width=.5pt,rounded corners=5mm] 
 (1.5,3.8)--(3,4.9)--(4.5,3.8)--cycle
 (1.7,1.8)--(1.2,3.5)--(4.5,1.85)--cycle 
 (1.3,1.75)--(4.8,3.5)--(4.3,1.6)--cycle;
 
\node[P](0)at(3,4.5){}; 
\node[P](6)at(2,4){};                 
\node[P](1)at(4,4){};
     
\node[P](5)at (1.6,3) {};       
\node[P](2)at (4.4,3){};
   
\node[P](4)at (2.25,2){};  
\node[P](3)at (3.75,2){}; 

 \node[txt]()at (3,1) {$C_7^{(3)}$};
\end{tikzpicture}
\hskip.3cm
\begin{tikzpicture}[scale=.08]
 
      \node[txt](x1)at (10,-2){}; 
     \node[txt](y1)at (22.3,25){};     
     \draw  []   (x1)--(y1);
    
     \node[txt](x2)at (-2,-2){}; 
     \node[txt](y2)at (24,24){};     
     \draw  (x2)--(y2);  
     
       \node[txt](u2)at (25,1){}; 
     \node[txt](v2)at (-3,1){};     
     \draw   (u2)--(v2);  
     
       \node[txt](u1)at (22,-1){}; 
     \node[txt](v1)at (9,18){};      
    \draw   (u1)--(v1);
   
       \node[txt](z1)at (-3,-1){}; 
     \node[txt](w1)at (25.6,14){};      
     \draw []   (z1)--(w1);
      
        \node[txt](z2)at (20.6,-2){}; 
     \node[txt](w2)at (20.1,24){};     
     \draw   (z2)--(w2);  
     
  \node[P,label=left:$$](p)at (20,20){}; 
  \node[P,label=above:$$](q)at (20.5,1){}; 
  
     \node[P](v1)at (11.5,1){};  
    \node[P,label=below:$$](c)at (1,1){};
      
   \node[P,label=above:$$](C)at (12.6,12.6){}; 
       \node[P](v2)at (15.2,8.6){};    
  \node[P,](v3)at (20.3,11.1){}; 
      
  \draw[] (15,13.7) to[out=180,in=180] (v1);
\draw (19,12.5) to[out=-30,in=10] (v1);
             \node[txt]()at (15,-5){F};
\end{tikzpicture}
\hskip.3cm
\begin{tikzpicture}[scale=.08]
 
      \node[txt](x1)at (10,-2){}; 
     \node[txt](y1)at (22.3,25){};     
     \draw  []   (x1)--(y1);
    
     \node[txt](x2)at (-2,-2){}; 
     \node[txt](y2)at (24,24){};     
     \draw  (x2)--(y2);  
     
       \node[txt](u2)at (25,1){}; 
     \node[txt](v2)at (-3,1){};     
     \draw   (u2)--(v2);  
     
       \node[txt](u1)at (22,-1){}; 
     \node[txt](v1)at (9,18){};      
    \draw   (u1)--(v1);
   
       \node[txt](z1)at (-3,-1){}; 
     \node[txt](w1)at (25.6,14){};      
     \draw []   (z1)--(w1);
      
        \node[txt](z2)at (20.6,-2){}; 
     \node[txt](w2)at (20.1,24){};     
     \draw   (z2)--(w2);  
     
  \node[P,label=left:$$](p)at (20,20){}; 
  \node[P,label=above:$$](q)at (20.5,1){}; 
  
     \node[P](v1)at (11.5,1){};  
    \node[P,label=below:$$](c)at (1,1){};

   \node[P,label=above:$$](C)at (12.6,12.6){}; 
       \node[P](v2)at (15.2,8.6){};
    
  \node[P,](v2)at (20.3,11.1){}; 
      
             \node[txt]()at (15,-5){F$^-$};
\end{tikzpicture}
\hskip.3cm
\begin{tikzpicture}[scale=.08]
 
      \node[txt](x1)at (2,0){}; 
     \node[txt](y1)at (14,24){};     
     \draw[]    (x1)--(y1);
     \node[txt](x2)at (12,-1){}; 
     \node[txt](y2)at (12,26){};     
     \draw[]    (x2)--(y2);  
       \node[txt](x3)at (27,0){}; 
     \node[txt](y3)at (9,24){};     
     \draw[]    (x3)--(y3);  
     
       \node[txt](u1)at (-1,4){}; 
     \node[txt](v1)at (30,4){};     
     \draw[]    (u1)--(v1);
     \node[txt](u2)at (4,12){}; 
     \node[txt](v2)at (24,12){};     
     \draw[]    (u2)--(v2);  
     
  \node[P,label=above:$$](a1)at (4,4){}; 
   \node[P,label=above:$$](b1)at (8,12){};  
     \node[P,label=above:$$](b)at (12,20){};  
    
    \node[P,label=above:$$](a2)at (12,4){};   
     \node[P,label=above:$$](b2)at (12,12){}; 
     
    \node[P,label=above:$$](a3)at (24,4){};    
     \node[P,label=above:$$](b3)at (18,12){}; 
       \node[txt](C)at (15,-4){C};
       
\end{tikzpicture}
\hskip.3cm
\begin{tikzpicture}[scale=.08]
 
      \node[txt](x1)at (2,0){}; 
     \node[txt](y1)at (14,24){};     
     \draw[]    (x1)--(y1);
      
       \node[txt](x3)at (27,0){}; 
     \node[txt](y3)at (9,24){};     
     \draw[]    (x3)--(y3);  
     
       \node[txt](u1)at (-1,4){}; 
     \node[txt](v1)at (30,4){};     
     \draw[]    (u1)--(v1);
  
  \node[P,label=above:$$](A)at (4,4){}; 
   \node[P,label=above:$$](B)at (24,4){};
      \node[P,label=above:$$](C)at (12,20){};   
    
    \node[P,label=above:$$](C1)at (14,4){};         
     \node[P,label=above:$$](B1)at (8,12){};          
     \node[P,label=above:$$](A1)at (18,12){}; 
     
  \draw[,rotate=45] 
(15.7,-5) ellipse (8 and 4);
  \draw (13,11) ellipse (7 and 4);
    \draw[,rotate=-45] 
(3,14) ellipse (9 and 4);
  \node[P,label=above:$$](X)at (12.8,8.8){}; 
  \node[txt](D)at (15,-4){D}; 
  \end{tikzpicture}
\hskip.3cm
\begin{tikzpicture}[scale=.08]
 
      \node[txt](x1)at (2,0){}; 
     \node[txt](y1)at (14,24){};     
     \draw[]   (x1)--(y1);
      
       \node[txt](x3)at (27,0){}; 
     \node[txt](y3)at (9,24){};     
     \draw[]    (x3)--(y3);  
     
       \node[txt](u1)at (-1,4){}; 
     \node[txt](v1)at (30,4){};     
     \draw[]    (u1)--(v1);
  
  \node[P,label=above:$$](A)at (4,4){}; 
   \node[P,label=above:$$](B)at (24,4){};
      \node[P,label=above:$$](C)at (12,20){};   
    
    \node[P,label=above:$$](C1)at (14,4){};         
     \node[P,label=above:$$](B1)at (8,12){};          
     \node[P,label=above:$$](A1)at (18,12){}; 
       \node[P,label=above:$$](X)at (12.8,8.8){}; 
       
  \draw[,rotate=45] 
(15.7,-5) ellipse (8 and 4);
  \draw[]  (13,11) ellipse (7 and 4);
    \draw[,rotate=-45] 
(3,14) ellipse (9 and 4);

  \draw[] (5.8,2.5) to[out=160,in=170] (12,20);
    \draw (22.8,2.5) to[out=50,in=-10] (12,20);
  \node[txt](D)at (15,-4){D$^+$}; 
  \end{tikzpicture}
\vskip.5cm
\caption{}
\label{CDetc}
\end{figure}

\begin{proposition}
\label{taunoK4}
There are six  $3$-uniform hypergraphs on $7$ vertices with $\tau\geq 3$ that  contain neither $C_3^{(3)}$ nor  $K_4^{(3)}$; these hypergraphs are 
  $C_7^{(3)}$,  $F$ (the Fano-plane), $F^-$ (the Fano-plane minus a line), 
 $C,D,$ and  $D^+$ (see Fig.\ref{CDetc}).
\end{proposition}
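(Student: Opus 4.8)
The plan is an exhaustive but structured enumeration, parallel to the proof of Proposition \ref{tauK4} but using the hypothesis ``$H$ contains no $K_4^{(3)}$'' in place of ``$H$ contains a $K_4^{(3)}$''. Two observations will be used repeatedly. First, $\tau(H)\ge 3$ means that for every pair of vertices $\{u,v\}$ some edge of $H$ is disjoint from $\{u,v\}$; thus the five vertices off any pair always carry an edge, which both forces many edges and prevents them from concentrating on few vertices. Second, the absence of a $C_3^{(3)}$ says exactly that whenever edges $\{a,b,x\}$ and $\{a,b,y\}$ share the pair $\{a,b\}$, no triple $\{x,y,z\}$ with $z\notin\{a,b\}$ is an edge. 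Since three pairwise disjoint triples need nine vertices and $\tau(H)\ge 3$ excludes the empty hypergraph, $\nu(H)\in\{1,2\}$, and I would split the argument on this.

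\textbf{The intersecting case $\nu(H)=1$.} I would first show $H$ is linear (no pair lies in two edges): if $\{a,b,x\},\{a,b,y\}\in H$, then since $\{a,b\}$ is not a transversal some edge $f$ avoids $\{a,b\}$, and being intersecting $f$ must contain both $x$ and $y$, so $f=\{x,y,z\}$ with $z\notin\{a,b\}$, a forbidden $C_3^{(3)}$. Now $\tau\ge 3$ rules out a pencil, and a non-pencil linear intersecting $3$-uniform family on seven vertices with $\tau\ge 3$ realizes its edges as lines of the Fano plane on those vertices; checking which sub-configurations keep $\tau\ge 3$ shows that removing two or more lines always produces a $2$-element transversal, so $H$ is $F$ or $F^-$.

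\textbf{The case $\nu(H)=2$.} Fix disjoint edges $e_1=\{1,2,3\}$ and $e_2=\{4,5,6\}$ and let $7$ be the remaining vertex. Every other edge meets $e_1\cup e_2$, and the conditions that $\{i,j\}$ (for $i\in e_1$, $j\in e_2$) and $\{k,7\}$ (for $k\le 6$) are not transversals force a prescribed supply of ``connecting'' edges; working modulo the symmetry group $S_3\times S_3$ acting on $e_1\times e_2$ (together with the swap $e_1\leftrightarrow e_2$ when the configuration allows it) I would enumerate the possibilities and discard those that contain a $K_4^{(3)}$ or a $C_3^{(3)}$, and those whose transversal number drops to $2$. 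What remains is exactly $C_7^{(3)}$, $C$, $D$, and $D^+$. Finally one verifies directly that each of the six hypergraphs on the list does have $\tau\ge 3$ and contains neither forbidden configuration: for $F$ and $F^-$ this is immediate since two lines of a projective plane meet in a single point, and for $C_7^{(3)}$, $C$, $D$, $D^+$ it is a short finite check.

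The step I expect to be the real work is the \emph{completeness} of the $\nu=2$ enumeration: there are many ways to introduce the connecting edges, and one must use the full symmetry group and both forbidden configurations at every branch so that no case is overlooked and the final list is precisely these four; the intersecting case is comparatively rigid once linearity is established.
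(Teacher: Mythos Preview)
Your outline is sound and would arrive at the same six hypergraphs. The intersecting case is handled in the same spirit as the paper: both arguments establish linearity (the paper states $|e_1\cap e_2|=1$ as a consequence of $\tau\ge 3$ together with $C_3^{(3)}$-freeness; your derivation via an edge missing $\{a,b\}$ is the explicit form of that step), and both then recognise the resulting configuration as a subset of the Fano lines, yielding $F$ and $F^-$.

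The genuine difference is in the $\nu=2$ branch. You propose a direct symmetry-reduced enumeration of the edges connecting $e_1$, $e_2$, and the seventh vertex under the $S_3\times S_3$ action. The paper instead organises this case by structural invariants: it first records the observation that for every vertex $w$ the hypergraph $H-w$ contains either two independent edges or a spanning triangle, and then branches on whether $H$ has a spanning $3$-star (isolating $C$), a $3$-wheel (isolating $D$ and $D^+$), or neither, in which last case a short path-extension argument forces $C_7^{(3)}$. Your enumeration is correct in principle and easy to set up, but the ``real work'' you flag is exactly what the paper's structural case split is designed to bypass: it trades the bookkeeping of the $S_3\times S_3$ search for a handful of short local arguments, each of which pins down one of $C$, $D$, $D^+$, $C_7^{(3)}$ directly.
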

\begin{proof}  Let $H=(V,E)$ be a $3$-uniform $K_4^{(3)}$- and $C_3^{(3)}$-free hypergraph with $|V|=7$ and $\tau(H)\geq 3$. Because $\tau(H)\geq 3$ and 
$H$ is $C_3^{(3)}$-free, we have the following.

\noindent {\bf Observation}
\label{H-w}
{\it For every $w\in V$ the partial hypergraph $H-w$ 
(obtained from $H$ by removing  $w$ and all incident edges) has either two independent edges or 
a  triangle shown in Fig.\ref{triangle}.
}\\

First assume that any two edges of $H$ have a common vertex.
 By Observation \ref{H-w}, $H$ has no vertex of degree $0$ or $1$. Since  the degree sum must be a multiple of $3$, there exists a vertex $w$ contained by three (or more) edges.  
Because $\tau(H)\geq 3$ and there is no $C_3^{(3)}$, we have $|e_1\cap e_2|=1$, for any $e_1\neq e_2$. Therefore $H$ has a spanning $3$-star $S_0$. Removing the center $w$ of $S_0$ we obtain a spanning triangle $T_0$ such that no edge of $T_0$ has two common vertices with any edge of $S_0$. The essentially unique placement of $T_0$ into $S_0$ yields the lines of $F^-$ (a Fano plane with one line removed). Only this line missing from the Fano-plane can be added without creating a $C_3^{(3)}$.

Suppose now that $H$ has a spanning $3$-star with center $w$ and $H-w$ contains two independent edges. Then $H\cong C$, moreover, no edge can be added to $H$ without creating $C_3^{(3)}$.

From now on we assume that $H$ has independent edges and has no spanning $3$-star. Suppose that $H$ has a {\it $3$-wheel} $W_0$ centered at $w$ (defined as a $4$-clique minus the  edge not containing $w$). By Observation \ref{H-w}, there is a pair of disjoint edges or a triangle in $H-w$. Since an edge containing two vertices  of $H-w$ forms a $C_3^{(3)}$, 
there is a triangle $T_0$ spanning $H-w$. Observe that
the `corner vertices` of $T_0$ are not in $H-w$, by the same argument. Thus either $H\cong D$ or one more edge can be included leading to $D^+$.
 \begin{figure}[http]
 \centering
  \tikzstyle{A} = [circle, draw=black!,minimum width=2pt, inner sep=2.5pt]
\tikzstyle{P} = [circle, draw=black!, minimum width=3pt, inner sep=1pt, fill=black]
\tikzstyle{txt}  = [circle, minimum width=1pt, draw=white, inner sep=0pt]
\tikzstyle{B} = [rectangle, draw=black!,minimum width=2pt, inner sep=2.5pt]

\begin{tikzpicture}[scale=.1]    
    
       \draw[] (5,12) ellipse (9 and 1.5);
        \draw[rotate=45] (11,1) ellipse (8 and 1.5);
 \draw[rotate=-45] (-4,8) ellipse (8 and 1.5);
 
   \node[P,label=above:$$](b1)at (10,12){}; 
        \node[P,label=above:$$](b2)at (5,12){};   
        \node[P,label=above:$$](b3)at (0,12){}; 
        
     \node[P,label=above:$w$](w)at (5,20){};  
    
    \node[P,label=above:$$](a2)at (5,6){};   
       \node[P,label=above:$$](a1)at (2.6,9){}; 
    \node[P,label=above:$$](a3)at (7.4,9){};

         \node[txt]()at (5,29){\small triangle};
       
\end{tikzpicture}
\hskip1.5cm
\begin{tikzpicture}[scale=.1]    
    
       \draw[] (5,13) ellipse (1.5 and 9);
        \draw[rotate=45] (10,10) ellipse (10 and 1.5);
 \draw[rotate=-45] (-5,17) ellipse (10 and 1.5);
 
         \node[P,label=above:$$](b1)at (9.6,15){}; 
        \node[P,label=above:$$](b2)at (5,14){};   
        \node[P,label=above:$$](b3)at (0.4,15){}; 
     \node[P,label=above:$w$](w)at (5,19.5){};  
        \node[P,label=above:$$](a1)at (13,11){}; 
        \node[P,label=above:$$](a2)at (5,9){};   
        \node[P,label=above:$$](a3)at (-3.5,11){}; 

        \node[txt]()at (5,31){\small spanning $3$-star};
\end{tikzpicture}
\hskip1.5cm
\begin{tikzpicture}[scale=.1]
 
      \node[txt](x1)at (10,-2){}; 
     \node[txt](y1)at (22.3,25){};     
     \draw  []   (x1)--(y1);
    
     \node[txt](x2)at (-2,-2){}; 
     \node[txt](y2)at (24,24){};     
     \draw  (x2)--(y2);  
     
       \node[txt](u2)at (25,1){}; 
     \node[txt](v2)at (-3,1){};     
     \draw   (u2)--(v2);  
     
       \node[txt](u1)at (22,-1){}; 
     \node[txt](v1)at (9,18){};      
    \draw   (u1)--(v1);
   
       \node[txt](z1)at (-3,-1){}; 
     \node[txt](w1)at (25.6,14){};      
     \draw []   (z1)--(w1);
      
        \node[txt](z2)at (20.6,-2){}; 
     \node[txt](w2)at (20.1,24){};     
     \draw   (z2)--(w2);  
     
  \node[P,label=left:$w$](p)at (20,20){}; 
  \node[P,label=above:$$](q)at (20.5,1){}; 
  
     \node[P](v1)at (11.5,1){};  
    \node[P,label=below:$$](c)at (1,1){};

   \node[P,label=above:$$](C)at (12.6,12.6){}; 
       \node[P](v2)at (15.2,8.6){};
    
  \node[P,](v2)at (20.3,11.1){}; 
      
      \node[txt]()at (3,25){\small Fano minus a line};
        
\end{tikzpicture}
\hskip1.5cm
\begin{tikzpicture}[scale=.08]    
    
       \draw[rotate=45] (10,10) ellipse (12 and 8);
        \draw[rotate=-45] (-4,15) ellipse (12 and 8);
            \draw[] (5,8) ellipse (12 and 8);
  \node[P,label=above:$$](c)at (15,8){};  
        \node[P,label=above:$$](a)at (-5,8){};   
        \node[P,label=left:$$](w)at (5,12){};     
         \node[txt]()at (2.3,12){\small $w$};    
     \node[P,label=above:$$](b)at (5,20){};         
       
         \node[txt]()at (5,33){\small $3$-wheel};
       
\end{tikzpicture}
\vskip.5cm
\caption{}
\label{triangle}
\end{figure}

Excluding the previous cases we assume that $H$ has 
neither a spanning $3$-star, nor a $3$-wheel, and nor a
$K_4^{(3)}$. 
Of course, we also have that $H$ is $C_3^{(3)}$-free and $\tau(H)\geq 3$. We claim that $H\cong C_7^{(3)}$.

Suppose that $H$ has three distinct edges $e_1,e_2,e_3$ such that $e_1\cap e_2\cap e_3=\{w,w^\prime\}$.  Observe that each edge in $H-w$ with two vertices in 
$(e_1\cup e_2\cup e_3)\setminus \{w\}$ forms a $C_3^{(3)}$ or a $3$-wheel, hence $H-w$ does not contain independent edges.  
Then, by Observation \ref{H-w}, $H-w$ has a spanning triangle $T_0$ with edges $f_1, f_2, f_3$; morever, $T_0$ has no  corner vertex in  $(e_1\cup e_2\cup e_3)\setminus \{w^\prime\}$. Without loss of generality we may assume that $f_1\cap f_2=\{w^\prime\}$  and 
$|f_1\cap e_1|=|f_2\cap e_2|=2$. This means that $f_1,f_2$ and $e_3$ is a spanning $3$-star of $H$, a contradiction.

\begin{figure}[http]
\centering
  \tikzstyle{A} = [circle, draw=black!,minimum width=2pt, inner sep=2.5pt]
\tikzstyle{P} = [circle, draw=black!, minimum width=3pt, inner sep=1pt, fill=black]
\tikzstyle{txt}  = [circle, minimum width=1pt, draw=white, inner sep=0pt]
\tikzstyle{B} = [rectangle, draw=black!,minimum width=2pt, inner sep=2.5pt]
\begin{tikzpicture}[scale=.15]    
    
       \draw[] (5,15) ellipse (3 and 8);
        \draw[rotate=45] (10,10) ellipse (9 and 3);
 \draw[rotate=-45,line width=1] (-3,17) ellipse (9 and 3);

        \node[P,label=right:$w^\prime$](w')at (5,17){};     \node[B]()at (5,17){};     
     \node[P,label=above:$w$](w)at (5,20){};  
      
        \node[P,label=above:$$](a1)at (13,12){}; 
        \node[P,label=above:$$](a2)at (5,10){};   
        \node[P,label=above:$$](a3)at (-2.2,12){}; 
        
          \node[P,label=below:$$](b2)at (0.4,5){};    \node[B]()at (.4,5){}; 
          \node[P,label=below:$$](b1)at (10,5){};  \node[B]()at (10,5){}; 
           \node[txt]()at (10,20){$e_3$};
       \node[txt]()at (5,25){$e_2$};
        \node[txt]()at (0,20){$e_1$};  
        
        \draw[line width=1.2]   (w')--(a2)--(b1)--(w');
         \draw[line width=1.2]   (w')--(a3)--(b2)--(w');
            \node[txt]()at (11,7.3){$f_2$};
        \node[txt]()at (-2,7){$f_1$};  
\end{tikzpicture}
\hskip.5cm
\begin{tikzpicture}[scale=.15]    
    
       \draw[] (5,15) ellipse (2.5 and 8);
        \draw[rotate=45] (10,10) ellipse (10 and 2.5);
 \draw[rotate=-45] (-3,18) ellipse (10 and 2);
 
         \node[P,label=above:$$](b1)at (10,15.6){}; 
        \node[P,label=left:$w^\prime$](w')at (3.4,16.4){};   
    
     \node[P,label=above:$w$](w)at (5.5,19){};  
     
        \node[P,label=above:$$](a1)at (14,12){}; 
        \node[P,label=above:$$](a2)at (5,11){};   
        \node[P,label=above:$$](a3)at (-2,12){}; 
        
           \node[P,label=below:$z$](z)at (11,6){}; 
           \node[txt]()at (10,20){$e_3$};
       \node[txt]()at (5,25){$e_2$};
        \node[txt]()at (0,20){$e_1$};  
\end{tikzpicture}
\hskip.5cm
\begin{tikzpicture}[scale=.17]    
    
       \draw[] (5,12) ellipse (9 and 1.5);
        \draw[rotate=45] (11,1) ellipse (8 and 1.5);
 \draw[rotate=-45] (-4,8) ellipse (8 and 1.5);
 
   \node[P,label=above:$$](b1)at (10,12){}; 
        \node[P,label=above:$$](b2)at (5,12){};   
        \node[P,label=above:$$](b3)at (0,12){}; 
        
     \node[P,label=above:$w$](w)at (5,18){};  
    
    \node[P,label=above:$$](a2)at (5,6){};   
       \node[P,label=above:$$](a1)at (2.6,9){}; 
    \node[P,label=above:$$](a3)at (7.4,9){};   
     
           \draw[line width=1.2]   (w)--(b2)--(a1)--(w);
       
\end{tikzpicture}
\hskip.5cm
\begin{tikzpicture}[scale=.12]    
    
       \draw[] (5,18) ellipse (5 and 3);
        \draw[rotate=45] (11,10) ellipse (8 and 2);
 \draw[rotate=-45] (-4,18) ellipse (8 and 2);
 
         \node[P,label=above:$$](b1)at (8,17){}; 
        \node[P,label=left:$$]()at (3,17){};   
    
     \node[P,label=above:$$](w)at (5.5,19.3){};  
     \node[txt]()at (5.5,23){$w$};
        \node[P,label=above:$$](a1)at (12,13){}; 
        \node[P,label=above:$$](a2)at (1,6){};   
        \node[P,label=above:$$](a3)at (-1,13){}; 
        
           \node[P,label=below:$$]()at (10,6){}; 
           \node[txt]()at (12,25){\small $P_3^{(3)}$};
    
\end{tikzpicture}
\end{figure}

Suppose now that $H$ has distinct edges $e_1,e_2,e_3$ such that
 $e_1\cap e_2=\{w,w^\prime\}$ and $e_1\cap e_2\cap e_3=\{w\}$. 
 By Observation  \ref{H-w}, $H-w$ has independent edges $f_1,f_2$ or three edges of a triangle, $f_1,f_2,f_3$. Observe that 
edges induced by  $ (e_1\cup e_2\cup e_3)\setminus\{w\}$ produce
 a $C_3^{(3)}$ or a $3$-wheel with two edges from $e_1,e_2,e_3$. 
 Note that there exists a vertex $z$ that is not in the union 
  $e_1\cup e_2\cup e_3$. Now $z$  belongs to at most one among $f_1,f_2$ or at most two among $f_1,f_2,f_3$, so  $f_i\subset (e_1\cup e_2\cup e_3)\setminus\{w\}$, for some $i=1,2$ or $3$, a contradiction.

The argument above shows that if a vertex belongs to three edges of $H$, then one of them is contained in the union of the other two. 
Due to this property, if a triangle spans $H-w$ then any edge $f$ through $w$ cannot contain a corner vertex of the triangle. A second edge through $w$ would result a common vertex $w^\prime$, neither contained by the union of the other two, a contradiction.

For $k\leq 6$, let the $k$-path $P_k^{(3)}$ be defined here as the subpath containing $k$ consecutive edges of a $C_7^{(3)}$. 
Since each of the $7$ vertices of $H$ has degree at least two, and the sum of the degrees is a multiple of $3$, there is a vertex $w$ contained by three (or more) edges; these three form a $3$-path $P_3^{(3)}$. Then $H-w$ 
is spanned by two independent edges, the only possibility to avoid a $C_3^{(3)}$ and a triangle is extending the $3$-path at both ends to a $5$-path. Repeating the same argument $P_3^{(3)}$ closes to a $7$-cycle. No further triples can be added to $C_7^{(3)}$ without creating a $3$-cycle, thus  $H\cong C_7^{(3)}$ follows. 
\end{proof}

\noindent{\it Proof of Theorem \ref{k=4}.} We assume that the convex red/blue $K_7^{(3)}$ has no blue $3$-cycle $C_3^{(3)}$; otherwise, the theorem follows by Proposition \ref{C3}.
Since there is no red $K_5^{(3)}$,  the hypergraph $H$ of the blue edges 
has transversal number $\tau(H)\geq 3$.
Then  $H$ is isomorphic to one of the $13$ hypergraphs characterized in Propositions \ref{tauK4} and \ref{taunoK4}.

If  $A\subseteq H$, then every 
red $4$-clique contains two vertices from the blue $4$-clique $K_0\subset A$, and two vertices outside $K_0$; thus any two vertices of $A$ not in $K_0$ form a $K_4^{(3)}$-transversal. 

For $A\not\subseteq H$ the red/blue $K_7^{(3)}$ is not convex if $H\cong C_7^{(3)}$, by Proposition \ref{7cycle},  and  not convex if $H\cong C$, by Proposition \ref{H1}.
If $H$ is among $B^-$, $F$, $F^-$,  $D$ and $D^+$, then
 the convexity requirement in Proposition \ref{V} is violated. \qed
\begin{figure}[http]
\hfill
\begin{center}\begin{tikzpicture}[scale=0.8]
	\setupcoords
	\filldraw[thick,color=col1,fill opacity=0.3] (p01236) -- (p13468) -- (p14578) -- cycle;

	\filldraw[thick,color=col0,fill opacity=0.3] (p01236) -- (p02357) -- (p02478) -- (p04578) -- (p01458) -- (p01456) -- cycle;
	\filldraw[thick,color=col5,fill opacity=0.3] (p23567) -- (p02578) -- (p14578) -- (p01456) -- (p01356) -- (p02356) -- cycle;

	\filldraw[thick,color=col2,fill opacity=0.3] (p02478) -- (p23567) -- (p01236) -- cycle;
	\filldraw[thick,color=col7,fill opacity=0.3] (p23567) -- (p02478) -- (p14578) -- cycle;

	\filldraw[thick,color=col3,fill opacity=0.3] (p23567) -- (p02357) -- (p13468) -- (p01236) -- cycle;
	\filldraw[thick,color=col8,fill opacity=0.3] (p02578) -- (p02478) -- (p14578) -- (p13468) -- cycle;
	
	\filldraw[thick,color=col4,fill opacity=0.3] (p02478) -- (p14578) -- (p13468) -- (p01456) -- cycle;
	\filldraw[thick,color=col6,fill opacity=0.3] (p23567) -- (p01456) -- (p13468) -- (p01236) -- cycle;


	\path[use as bounding box] (current bounding box.south west) rectangle (current bounding box.north east);
	\pgfmathsetmacro{\legy}{7.25}
	\foreach \v in {0,...,8}{
		\filldraw[thick,color=col\v,fill opacity=1] (-9.8,\legy-0.5*\v) rectangle ++(0.4,0.3);
		\node at (-9.1,\legy+0.15-0.5*\v){\v};
	}
\end{tikzpicture}\end{center}
\hfill\mbox{}
\caption{}
\label{9const}
\end{figure}

\section{The nine convex sets construction}
\label{9convex}
We prove the following theorem.
\begin{theorem}
\label{k=5}
There exist $9$ convex sets in the plane such that no point of the
plane is covered more than $5$ times, and there are no two sets among them whose union contains all the $5$-covered points.
\end{theorem}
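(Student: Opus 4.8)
The plan is to exhibit the explicit configuration of nine convex polygons shown in Figure~\ref{9const} and verify it has the two required properties. For $i\in\{0,1,\dots,8\}$ let $S_i$ be the set coloured $\mathrm{col}\,i$, namely the convex hull of the marked points whose $5$-element label contains $i$ (as drawn, $S_0$ and $S_5$ are hexagons and $S_1,\dots,S_4,S_6,\dots,S_8$ are triangles or quadrilaterals). The twelve marked points carry, exactly as in the figure, the labels $01356,\,02356,\,02357,\,02578,\,04578,\,01458,\,01456,\,01236,\,02478,\,14578,\,23567,\,13468$, regarded as $5$-subsets of $\{0,\dots,8\}$; the point $p_S$ with label $S$ is among the points defining $S_i$ for every $i\in S$, hence $p_S\in S_i$ for all $i\in S$. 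The geometric heart of the matter is the claim
\[
\text{(I): no point of }\R^2\text{ belongs to six of the sets }S_0,\dots,S_8 .
\]
Granting (I), each $p_S$ belongs to exactly the five sets $\{S_i:i\in S\}$ (at least these five, and at most five by (I)); in particular $p_S$ is a $5$-covered point and $p_S\notin S_i$ whenever $i\notin S$, so $p_S$ lies outside $S_i\cup S_j$ precisely when $\{i,j\}\subseteq\overline S$, the $4$-element complement of $S$ in $\{0,\dots,8\}$.

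It then remains to check, purely combinatorially, that for every pair $\{i,j\}\subseteq\{0,\dots,8\}$ some label $S$ satisfies $\{i,j\}\subseteq\overline S$; equivalently, that the twelve complements $\overline S$ --- the $4$-subsets $\{2,4,7,8\},\{1,4,7,8\},\{1,4,6,8\},\{1,3,4,6\},\{1,2,3,6\},\{2,3,6,7\},\{2,3,7,8\},\{4,5,7,8\},\{1,3,5,6\},\{0,2,3,6\},\{0,1,4,8\},\{0,2,5,7\}$ --- together cover all $\binom{9}{2}=36$ pairs of $\{0,\dots,8\}$. A direct inspection confirms this (the twelve $4$-sets form a $2$-$(9,4)$ covering design). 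Hence for every pair of the nine convex sets there is a marked $5$-covered point lying outside their union, which is exactly the assertion of the theorem; in the terminology of Section~\ref{arrow} this construction witnesses $n^*(5,2)\le 9$.

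To prove (I) I would invoke Helly's theorem: six convex sets in the plane have a common point if and only if every three of them do, so (I) holds as soon as, for each of the $\binom{9}{6}$ six-element index sets $F$, some triple $\{i,j,k\}\subseteq F$ satisfies $S_i\cap S_j\cap S_k=\varnothing$. Every triple contained in one of the twelve labels has a common point, namely the corresponding marked vertex; the substance of the verification is that these are the \emph{only} triples with nonempty common intersection, i.e.\ from the explicit coordinates of the marked points one confirms $S_i\cap S_j\cap S_k\neq\varnothing$ exactly when $\{i,j,k\}$ lies in one of the twelve labels. A short combinatorial argument then shows that no six-element $F$ has all of its triples contained in labels --- equivalently, every such $F$ has at most three of its six deletions $F\setminus\{x\}$ among the twelve labels --- so (I) follows.

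The \textbf{main obstacle} is precisely this last geometric verification: certifying from the explicit coordinates that the nine convex hulls realize the prescribed nerve, whose maximal faces are the twelve labelled $5$-sets, with no accidental deeper cell hidden in the planar arrangement. This cannot be finessed; it is a finite but delicate case analysis (carried out by hand from the coordinates, or checked by computer). Everything else --- the convexity of each $S_i$, the $5$-coverage of the marked points, and the ``no two sets cover all $5$-covered points'' property --- then follows formally as indicated above.
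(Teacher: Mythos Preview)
Your construction is the paper's construction, and your overall plan is sound; the differences lie in how the two properties are verified.

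For the ``no two sets cover all $5$-covered points'' part, your reduction to showing that the twelve $4$-element complements $\overline S$ cover all $\binom{9}{2}$ pairs is cleaner than what the paper does. The paper instead observes an involutive symmetry of the configuration (swapping $0\leftrightarrow 5$, $2\leftrightarrow 7$, $3\leftrightarrow 8$, $4\leftrightarrow 6$) which halves the work, and then inspects the remaining pairs $\{i,j\}\subseteq\{0,1,2,3,4\}$ by hand against the picture.

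For property (I), your route through Helly is legitimate but buys less than you might hope: to invoke it you must first certify geometrically the entire $2$-skeleton of the nerve (which of the $84$ triples have empty intersection), and then run a combinatorial check over all $\binom{9}{6}$ six-sets. The paper sidesteps this by partitioning the nine sets into four small groups $NW=\{0,5\}$, $NE=\{1,2,7\}$, $SE=\{4,6\}$, $SW=\{3,8\}$ and arguing directly, using only a handful of geometric facts read off the figure (e.g.\ $1\cap 2\cap 7=\varnothing$, $3\cap 8$ is the single point $p_{13468}$, $1\cap 2=\{p_{01236}\}$, etc.), that a hypothetical family of six sets with a common point is forced into a contradiction. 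This is considerably less geometric verification than pinning down the full $2$-skeleton.

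One genuine slip: your claimed equivalence ``no six-element $F$ has all its triples in labels $\iff$ every $F$ has at most three of its deletions $F\setminus\{x\}$ among the labels'' is false in the direction you need. If four deletions of $F$ are labels then indeed every triple of $F$ lies in one of them; but the converse fails, since a triple of $F$ can sit inside a label that is not a subset of $F$. So verifying that each $F$ has at most three deletions among the labels does \emph{not} establish (I''). You must check (I'') directly (it is true, and finite, but not via the shortcut you wrote).
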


\begin{proof}
The $9$ convex sets are defined as convex hulls of subsets $12$ points seen in Fig.\ref{9const}. The convex sets are labeled with $\{0,1\ldots,8\}$; the points are labeled with a string of length $5$ specifying the convex sets containing that point:  

	\begin{center}
		$\{0,1,2,3,6\}$,
		$\{0,1,3,5,6\}$,
		$\{0,1,4,5,6\}$,
		$\{0,1,4,5,8\}$,
		$\{0,2,3,5,6\}$,
		$\{0,2,3,5,7\}$,
		
		$\{0,2,4,7,8\}$,
		$\{0,2,5,7,8\}$,
		$\{0,4,5,7,8\}$,
		$\{1,3,4,6,8\}$,
		$\{1,4,5,7,8\}$,
		$\{2,3,5,6,7\}$.\\	
	\end{center}
To help identify the convex hull specification of the $9$ sets
 a color palette is attached indicating the color code of the convex sets.
In the verification we use four auxiliary figures showing separately the following
subfamilies of convex sets:
$NW=\{0,5\}$, $NE=\{1,2,7\}$, $SE=\{4,6\}$, and $SW=\{3,8\}$.\\

\begin{center}
\begin{figure}[http]
\begin{tikzpicture}[scale=0.4]\hskip1cm
	\setupcoords
	\node at (barycentric cs:p01236=1,p01356=1,p02356=1){\huge 0};
	\node at (barycentric cs:p14578=1,p01458=1,p04578=1){\huge 5};
	\filldraw[thick,color=col0,fill opacity=0.3] (p01236) -- (p02357) -- (p02478) -- (p04578) -- (p01458) -- (p01456) -- cycle;
	\filldraw[thick,color=col5,fill opacity=0.3] (p23567) -- (p02578) -- (p14578) -- (p01456) -- (p01356) -- (p02356) -- cycle;
	\lockboundingbox
	\node[anchor=south] at (current bounding box.north west){$NW$};
\end{tikzpicture}
\hfill
\begin{tikzpicture}[scale=0.4]
	\setupcoords
	\node at (barycentric cs:p13468=1,p01456=0.6){\huge 1};
	\node at (barycentric cs:p02356=1.2,p02357=0.5,p23567=1){\huge 2};
	\node at (barycentric cs:p04578=1.2,p02578=0.5,p02478=1){\huge 7};
	\filldraw[thick,color=col1,fill opacity=0.3] (p01236) -- (p13468) -- (p14578) -- cycle;
	\filldraw[thick,color=col2,fill opacity=0.3] (p02478) -- (p23567) -- (p01236) -- cycle;
	\filldraw[thick,color=col7,fill opacity=0.3] (p23567) -- (p02478) -- (p14578) -- cycle;
	\lockboundingbox
	\node[anchor=south] at (current bounding box.north east){$NE$};
\end{tikzpicture}
\hfill\mbox{}
\vspace{2\baselineskip}

\hfill
\begin{tikzpicture}[scale=0.4]
	\setupcoords
	\node at (barycentric cs:p01236=1,p01356=1,p02356=1){\huge 3};
	\node at (barycentric cs:p14578=1,p01458=1,p04578=1){\huge 8};
	\filldraw[thick,color=col3,fill opacity=0.3] (p23567) -- (p02357) -- (p13468) -- (p01236) -- cycle;
	\filldraw[thick,color=col8,fill opacity=0.3] (p02578) -- (p02478) -- (p14578) -- (p13468) -- cycle;
	\lockboundingbox
	\node[anchor=south] at (current bounding box.south west){$SW$};
\end{tikzpicture}
\hfill
\begin{tikzpicture}[scale=0.4]
	\setupcoords
	\node at (barycentric cs:p01236=1,p01356=1,p02356=1){\huge 6};
	\node at (barycentric cs:p14578=1,p01458=1,p04578=1){\huge 4};
	\filldraw[thick,color=col4,fill opacity=0.3] (p02478) -- (p14578) -- (p13468) -- (p01456) -- cycle;
	\filldraw[thick,color=col6,fill opacity=0.3] (p23567) -- (p01456) -- (p13468) -- (p01236) -- cycle;
	\lockboundingbox
	\node[anchor=south] at (current bounding box.south east){$SE$};
\end{tikzpicture}
\hfill\mbox{}
\caption{}
\label{7plus5}
\end{figure}
\end{center}

Let $XX\in$$\{NW,NE,SE,SW\}$.
For  a family $F$ of convex sets we use the notation
$\|XX\|=|XX\cap F|$.
Next we prove that no subfamily of $6$ convex sets from
$F$ have a common point.
Assume, to the contrary that $F$ is a subfamily of $6$ convex sets
with a common point. 

(1) $\|NE\|\leq 2$, since $1\cap 2\cap 7=\varnothing$;

(2) $\|SW\|\leq 1$, because $\|SW\|=2$ would imply $3,8\in F$ and there are only three more sets, $1,4,6$, containing the unique common point of $3$ and $8$;

(3) If $\|NE\|=2$ then $2,7 \in NE\cap F$, since neither the unique point of intersection $01236\in 1\cap 2$ nor $14578\in 1\cap 7$ belongs to six sets. Then it would follow that $\|SE\|=\|SW\|=1$
and  $NW=\{0,5\}\subset F$. However, this is impossible because
$(2\cap 7)\cap 4$ and $(2\cap 7)\cap 6$ are the unique points
$23567$ and $02478$, which are missed by $0$ and $5$, respectively. Thus we conclude that  $\|NE\|\leq 1$. 

Summarizing the observations (1), (2), (3), we obtain that $SE=\{4,6\}\subset F$, $NW=\{0,5\}\subset  F$ and  $\|SW\|=\|NE\|=1$. 
Then the unique point
$01456\in (4\cap 6)\cap(0 \cap 5)$ does not belong to $3\cup 8$, a contradiction. 
Therefore, $|F|<6$, that is no six convex sets have a common point.

Next we verify that there are no two convex sets whose union covers all the $12$ intersection points. It is clear that the union of any two sets belonging to the same subfamily, $NW,NE,SE,$ or $SW$, misses an intersection point. By symmetry, it is enough to verify that the union of any two among the sets $0,1,2,3,4$ does the same, which can be done fast by inspection using the figures.
\end{proof}

\RenewDocumentCommand{\setupcoords}{}{
	\pgfmathsetmacro{\rad}{5}
	\foreach \v [count=\c from 0] in {01456,01356,01236,02356,23567,02357,02578,02478,04578,14578,01458}\coordinate (p\v) at (90+360/11*\c:\rad);
	\coordinate (p13468) at ($(p01356)!1!60:(p01458)$);

	\foreach \v in {01356,02356,02357,02578,04578,01458,01456,01236,02478,14578,23567,13468}{
		\node[vertex] at (p\v){};
		\node[anchor=south] at (p\v){\small\v};
	}
	\colorlet{col0}[rgb]{red>wheel,0,9}
	\colorlet{col1}[rgb]{red>wheel,1,9}
	\colorlet{col2}[rgb]{red>wheel,2,9}
	\colorlet{col3}[rgb]{red>wheel,3,9}
	\colorlet{col4}[rgb]{red>wheel,4,9}
	\colorlet{col5}[rgb]{red>wheel,5,9}
	\colorlet{col6}[rgb]{red>wheel,6,9}
	\colorlet{col7}[rgb]{red>wheel,7,9}
	\colorlet{col8}[rgb]{red>wheel,8,9}
}

\NewDocumentCommand{\drawset}{m}{
	\ifnum #1=0\relax
		\filldraw[thick,color=col0,fill opacity=0.3] (p01236) -- (p02356) -- (p02357) -- (p02578) -- (p02478) -- (p04578) -- (p01458) -- (p01456) -- (p01356) -- cycle;
	\fi
	\ifnum #1=1\relax
		\filldraw[thick,color=col1,fill opacity=0.3] (p01236) -- (p13468) -- (p14578) -- cycle;
	\fi
	\ifnum #1=2\relax
		\filldraw[thick,color=col2,fill opacity=0.3] (p02478) -- (p02578) -- (p02357) -- (p23567) -- (p02356) -- (p01236) -- cycle;
	\fi
	\ifnum #1=3\relax
		\filldraw[thick,color=col3,fill opacity=0.3] (p23567) -- (p02357) -- (p13468) -- (p01236) -- (p02356) -- cycle;
	\fi
	\ifnum #1=4\relax
		\filldraw[thick,color=col4,fill opacity=0.3] (p02478) -- (p04578) -- (p14578) -- (p13468) -- (p01456) -- cycle;
	\fi
	\ifnum #1=5\relax
		\filldraw[thick,color=col5,fill opacity=0.3] (p23567) -- (p02357) -- (p02578) -- (p04578) -- (p14578) -- (p01458) -- (p01456) -- (p01356) -- (p02356) -- cycle;
	\fi
	\ifnum #1=6\relax
		\filldraw[thick,color=col6,fill opacity=0.3] (p23567) -- (p01456) -- (p13468) -- (p01236) -- (p02356) -- cycle;
	\fi
	\ifnum #1=7\relax
		\filldraw[thick,color=col7,fill opacity=0.3] (p23567) -- (p02357) -- (p02578) -- (p02478) -- (p04578) -- (p14578) -- cycle;
	\fi
	\ifnum #1=8\relax
		\filldraw[thick,color=col8,fill opacity=0.3] (p02578) -- (p02478) -- (p04578) -- (p14578) -- (p13468) -- cycle;
	\fi
}
\begin{figure}[h]
\hfill
\begin{tikzpicture}[scale=0.4]
	\setupcoords
	\node at (barycentric cs:p01236=1,p01456=1,p02357=1){\huge 0};
	\drawset{0}
\end{tikzpicture}
\hfill
\begin{tikzpicture}[scale=0.4]
	\setupcoords
	\node at (barycentric cs:p14578=1,p01456=1,p02578=1){\huge 5};
	\drawset{5}
\end{tikzpicture}
\hfill
\begin{tikzpicture}[scale=0.4]
	\setupcoords
	\node at (barycentric cs:p13468=1,p01456=0.6){\huge 1};
	\node at (barycentric cs:p02356=1,p01236=1,p02478=0.5){\huge 2};
	\node at (barycentric cs:p14578=1,p04578=1,p23567=0.5){\huge 7};
	\drawset{1}
	\drawset{2}
	\drawset{7}
\end{tikzpicture}
\hfill\mbox{}
\vspace{2\baselineskip}

\hfill
\begin{tikzpicture}[scale=0.4]
	\setupcoords
	\node at (barycentric cs:p01236=1,p01356=1,p02357=1){\huge 3};
	\node at (barycentric cs:p14578=1,p01458=1,p02578=1){\huge 8};
	\drawset{3}
	\drawset{8}
\end{tikzpicture}
\hfill
\begin{tikzpicture}[scale=0.4]
	\setupcoords
	\node at (barycentric cs:p01236=1,p01456=1,p23567=1){\huge 6};
	\node at (barycentric cs:p14578=1,p01456=1,p02478=1){\huge 4};
	\drawset{4}
	\drawset{6}
\end{tikzpicture}
\hfill\vphantom{.}

\caption{}
\label{11points}
\end{figure}

The construction of the counterexample to prove Theorem \ref{k=5} started with the design of a $2$-representable $f$-vector,  based on Kalai's characterization \cite{K1,K2}. The second  step was to find the appropriate position of the $12$ intersection points of the $5$-tuples taken among the convex hulls generating the $9$ convex sets. 
The $f$-vector of the nerve of our construction is $(f_0,f_1,f_2,f_3,f_4,f_5)=(9,36,61,45,12,0)$, where $f_i$ is equal to the 
number of $i$-dimensional simplices. 
It is worth noting that the realization of this $f$-vector  is not unique. For instance, $11$ of the intersection points can be arranged around a circle as presented in Fig.\ref{11points}.
\section{Extensions} 
\label{arrow}
\subsection{}
Petruska's question prompts several extremal problems  involving convex sets in $\R^d$. For integers $d,t\geq 1$ and $\omega\geq d+1$, let 
$n^*(\omega,t;d)$ be the minimum $n$ such that
there is a $d$-representable red/blue $K_n^{(d+1)}$
with largest red clique size equals to $\omega$ and such that no $t$-vertex  transversal covers all maximum red cliques. The problem of determining  $n^*(\omega,t;d)$ has a substantial difficulty that for $d\geq 2$ no characterization is known for $d$-representable abstract simplicial complexes. 
Theorems \ref{k=4} and \ref{k=5} imply that   $n^*(4,2)=n^*(4,2;2)>7$ and $n^*(5,2)=n^*(5,2;2)\leq 9$.

The analogous extremal problem for $r$-uniform hypergraphs, without the $(r-1)$-representablity requirement, was proposed by Erd\H{os} \cite{E}.
Let $n(\omega,t;r)$ be  the minimum $n$ satisfying the property that
there is an $r$-uniform hypergraph
with largest clique size equal to $\omega$ and such that no $t$-vertex  transversal covers all maximum cliques.
It is clear that $n(\omega,t;r)\leq n^*(\omega,t;r-1)$. Another less obvious relationship
between the functions $n$ and $n^*$ is as follows.
\begin{lemma}
\label{interconnect} For $t,d\geq 2$,
 $\; n^*(\omega,t;d-1)\geq n^*(\omega,t-1;d-1)+1\geq n(\omega,t-1;d)+1$.
\end{lemma}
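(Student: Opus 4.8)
The plan is to prove the two displayed inequalities separately, since the middle quantity $n^*(\omega,t-1;d-1)$ serves as a bridge. For the right-hand inequality $n^*(\omega,t-1;d-1)\geq n(\omega,t-1;d)+1$, I would first recall the observation already noted in the text that $n(\omega,s;r)\leq n^*(\omega,s;r-1)$: any $(d-1)$-representable red/blue $K_n^{(d)}$ is in particular an honest $d$-uniform hypergraph (take the red edges as the hypergraph), its largest red clique is a clique in that hypergraph, and a transversal of all maximum red cliques is a transversal of all maximum cliques. So we already have $n^*(\omega,t-1;d-1)\geq n(\omega,t-1;d)$. To upgrade the ``$\geq$'' to ``$\geq \cdots +1$'', the key step is to exhibit, inside any extremal $(d-1)$-representable configuration on $n^*(\omega,t-1;d-1)$ vertices, one vertex whose deletion still leaves a valid hypergraph witnessing $n(\omega,t-1;d)$; equivalently, show the extremal $(d-1)$-representable example is never ``tight'' as a bare hypergraph example. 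Concretely, I would argue that in a $(d-1)$-representable $K_n^{(d)}$ with largest red clique $\omega$ and no $(t-1)$-transversal of the maximum red cliques, one can find a vertex $v$ that lies in no maximum red clique at all (or more carefully: a vertex one can remove without reducing below the needed structure). If every vertex were in some maximum red clique, a counting/Helly-type argument on the nerve should force a small transversal; I would lean on the $d$-collapsibility of $(d-1)$-representable complexes (Wegner's theorem, cited in the proof of Proposition \ref{7cycle}) to locate such a removable vertex.

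For the left-hand inequality $n^*(\omega,t;d-1)\geq n^*(\omega,t-1;d-1)+1$, the approach is a direct deletion argument within the class of $(d-1)$-representable configurations. Let $\F$ be a family of $n = n^*(\omega,t;d-1)$ convex sets in $\R^{d-1}$ realizing the extremal property: largest red $K_\omega^{(d)}$, and no $t$-element set meets all maximum red cliques. I want to produce, after deleting one well-chosen member of $\F$, a family of $n-1$ convex sets with largest red clique still $\omega$ and with no $(t-1)$-element transversal of its maximum red cliques; that would show $n^*(\omega,t-1;d-1)\leq n-1$, i.e.\ the claimed bound. The vertex to delete should be one that lies in at least one maximum red clique — then a $(t-1)$-transversal $T$ of the maximum cliques of $\F - v$ would, together with $v$, give a $t$-transversal of the maximum cliques of $\F$ (any maximum clique of $\F$ either survives in $\F-v$, hence meets $T$, or contains $v$), contradicting extremality of $\F$. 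One must also check that deleting $v$ does not destroy all maximum red cliques — i.e.\ that some maximum red $K_\omega^{(d)}$ avoids $v$; this holds as long as $v$ is not in \emph{every} maximum red clique, and if some vertex were in every maximum red clique then $\{v\}$ itself is a $1$-transversal, so $t\geq 2$ already gives a $t$-transversal, contradiction. So any vertex lying in at least one but not all maximum red cliques works, and such a vertex exists unless the maximum red cliques are pairwise disjoint or all share a common vertex — both easy to rule out in the extremal configuration (a common vertex contradicts $t\geq 2$ extremality; pairwise disjointness forces $n\geq 2\omega$, against minimality).

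The main obstacle I anticipate is the right-hand inequality, specifically justifying the ``$+1$'' — i.e.\ proving that an extremal \emph{representable} example for parameter $t-1$ in dimension $d-1$ always has strictly more vertices than an extremal \emph{abstract} example for parameter $t-1$ and uniformity $d$. The naive containment $n(\omega,t-1;d)\le n^*(\omega,t-1;d-1)$ is immediate, but the strict gap requires using representability in an essential way: there must be some configuration that is a legal abstract hypergraph witness but is \emph{not} $(d-1)$-representable, and moreover one that is minimal. I would try to show directly that the abstract extremal examples (for $t-1$, uniformity $d$) are never $(d-1)$-representable — perhaps because they contain a forbidden sub-configuration of the type catalogued in Section \ref{forbidden} (a blue $C_3^{(d+1)}$ analogue, or a non-$d$-collapsible core) — so that achieving representability forces at least one extra vertex. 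If a uniform structural reason is elusive, the fallback is to handle it by the same deletion trick as the left inequality but applied with the middle term playing the role of the smaller example, extracting the extra vertex from the failure of $1$-transversality; this is why I expect to route everything through $n^*(\omega,t-1;d-1)$ rather than comparing $n^*(\omega,t;d-1)$ to $n(\omega,t-1;d)$ directly.
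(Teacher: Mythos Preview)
Your treatment of the left inequality $n^*(\omega,t;d-1)\geq n^*(\omega,t-1;d-1)+1$ is correct and is essentially the paper's argument: take an extremal $(d-1)$-representable witness $H$ on $n^*(\omega,t;d-1)$ vertices and delete one vertex $v_0$; if $T$ is a $(t-1)$-set meeting every $\omega$-clique of $H-v_0$, then $T\cup\{v_0\}$ meets every $\omega$-clique of $H$, a contradiction. Your extra requirement that $v_0$ lie in at least one maximum clique is unnecessary for that implication; what is needed (and you correctly argue) is only that $v_0$ not lie in \emph{all} maximum cliques, so that $\omega(H-v_0)=\omega$. Such a $v_0$ exists since otherwise a single vertex would already be a $1$-transversal, contradicting $t\geq 2$. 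The digression about pairwise disjoint cliques can be dropped.

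Your treatment of the right inequality rests on a misreading. The lemma asserts
\[
n^*(\omega,t-1;d-1)+1 \;\geq\; n(\omega,t-1;d)+1,
\]
and after cancelling the $+1$ on both sides this is exactly the trivial inclusion $n^*(\omega,t-1;d-1)\geq n(\omega,t-1;d)$ that you yourself note holds by definition (every $(d-1)$-representable witness is in particular an abstract $d$-uniform witness). The paper's proof of this half is a single sentence: ``The second inequality is obvious by definition.'' There is no strict gap to establish, and the entire program you outline---locating a removable vertex via $d$-collapsibility and Wegner's theorem, or showing that abstract extremal examples are never $(d-1)$-representable---is aimed at a statement the lemma does not make. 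You wrote the target as $n^*(\omega,t-1;d-1)\geq n(\omega,t-1;d)+1$, dropping the $+1$ on the left; restore it and the difficulty disappears.
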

\begin{proof}   Let
$H$ be a $d$-representable $(d-1)$-uniform `witness hypergraph' of order $n^*(\omega,t;d-1)$ such that $\omega(H)=\omega$, 
and there is no $t$-vertex transversal for its $\omega$-cliques.
Removing a vertex $v_0$ from $H$ together with all edges of $H$ containing $v_0$,
one obtains a $d$-representable $(d-1)$-uniform witness hypergraph $H^-$  such that no $t-1$ vertices cover  its $\omega$-cliques. Thus 
$ n^*(\omega,t-1;d-1)\leq n^*(\omega,t;d-1)-1$ follows.
The second inequality is obvious by definition.  
\end{proof}
Several results and conjectures were established for $n(\omega,1;3)$ in \cite{GYLT,SzP,Tu}. For example, $n(5,1;3)=8$. Applying Lemma \ref{interconnect} we obtain that
 $n^*(5,2;2)\geq 9$. 
 This bound combined with Theorem \ref{k=4} results in the value $n^*(5,2;2)=9$, which we state as a sharpening of Theorem \ref{k=4}.
   \begin{theorem} 
\label{k=5sharp} 
If a convex red/blue $K_8^{(3)}$ 
contains no red   $K_6^{(3)}$,  then there exist two vertices
such that every red $K_5^{(3)}$ contains at least one of them.
Moreover, the claim is not true if $K_8^{(3)}$ is replaced with $K_9^{(3)}$.\qed
\end{theorem}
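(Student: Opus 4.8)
The plan is to recognize that the two halves of the statement are, up to a standard translation, the inequalities $n^*(5,2;2)\ge 9$ and $n^*(5,2;2)\le 9$ in the notation of Section \ref{arrow}, and to assemble them from results already at hand. The translation is the usual one through Helly's theorem in $\R^2$: a red $K_m^{(3)}$ in a convex red/blue clique means that every three of the corresponding $m$ convex sets intersect, hence by Helly all $m$ of them share a point, i.e.\ produce an $m$-covered point; and a pair of vertices $\{X,Y\}$ meets every red $K_5^{(3)}$ precisely when the union of the two sets $X,Y$ contains every $5$-covered point. Under this dictionary the first sentence of the theorem, after discarding the vacuous case in which there is no red $K_5^{(3)}$, says exactly that no convex red/blue $K_8^{(3)}$ with largest red clique size $5$ fails to have a two-vertex transversal of its maximum red cliques, which is the meaning of $n^*(5,2;2)\ge 9$.

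First I would establish $n^*(5,2;2)\ge 9$ by invoking Lemma \ref{interconnect} with $\omega=5$, $t=2$, $d=3$ (the hypotheses $t\ge 2$ and $d\ge 2$ are satisfied), which gives
\[
 n^*(5,2;2)\;\ge\; n^*(5,1;2)+1\;\ge\; n(5,1;3)+1 ,
\]
and then quoting the value $n(5,1;3)=8$ established in \cite{GYLT,SzP,Tu} --- only the lower bound $n(5,1;3)\ge 8$ is needed. This yields $n^*(5,2;2)\ge 9$, hence the first assertion of the theorem.

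Next I would prove the ``moreover'' clause by producing the required $K_9^{(3)}$, namely the family of nine convex sets constructed in the proof of Theorem \ref{k=5}, viewed as a red/blue clique. Since that family has no $6$-covered point it has no red $K_6^{(3)}$, because six sets every three of which intersect would, by Helly's theorem in $\R^2$, have a common ($6$-covered) point; since it has no two sets whose union meets all of its $5$-covered points, it has no two-vertex transversal of its red $K_5^{(3)}$'s; and since each of the twelve distinguished points is covered exactly five times, red $K_5^{(3)}$'s do occur, so the largest red clique has size exactly $5$. Combined with the first part this gives $n^*(5,2;2)=9$.

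There is no genuine obstacle: the argument is an assembly of Lemma \ref{interconnect}, the literature value $n(5,1;3)=8$, and Theorem \ref{k=5}. The only points that deserve care are getting the index substitution in Lemma \ref{interconnect} right (and using $n(5,1;3)$ in its lower-bound direction), and making explicit that the nine-set construction of Theorem \ref{k=5} has largest red clique size exactly $5$, so that it is a bona fide witness for the parameter $n^*(5,2;2)$ and not for a smaller one.
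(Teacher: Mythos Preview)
Your proposal is correct and matches the paper's own argument: the first assertion is exactly $n^*(5,2;2)\ge 9$, obtained by applying Lemma \ref{interconnect} (with $\omega=5$, $t=2$, $d=3$) together with the cited value $n(5,1;3)=8$, and the ``moreover'' clause is supplied by the nine-set construction of Theorem \ref{k=5}. Your added care in noting the vacuous case (no red $K_5^{(3)}$) and in checking that the construction has maximum red clique size exactly $5$ is appropriate but does not deviate from the paper's route.
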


\subsection{}
\label{t=1}
We know very little about 
the functions $n(\omega,t;r)$ and $n^*(\omega,t;r-1)$, even for $r=3$ and $t=1$.
To simplify the notation here, set  $n(\omega)=n(\omega,1;3)$ and  
$n^*(\omega)= n^*(\omega,1;2)$. 
Table (\ref{tab:1}) shows  $n(\omega)$\footnote{obtained by A. Jobson, A. K\'ezdy, J. Lehel, and T. Pervenecki, 
The intersection of the maximum cliques
in $3$-uniform hypergraphs (in preparation)}, for $\omega\leq 12$;  
we shall verify that $n^*(\omega)=n(\omega)$ in that range,  except the case $\omega=11$.
\begin{center}
\begin{table}[htp]
\sidecaption
\caption{}
\label{tab:1}   
\hskip1.74cm\begin{tabular}{|c||c|c|c|c|c|c|c|c|c|c|c}\hline
$\omega$&3&4&5&6&7&8&9&10&11&12&\\ \hline
$n$&5&6&8&9&10&12&13&14&15&17& \\ \hline
$n^*$&5&6&8&9&10&12&13&14&{\bf 16}&17& \\ \hline
\end{tabular}
\end{table}
\end{center}
According to a conjecture of Szemer\'edi and Petruska \cite{SzP}, 
 $n(\omega)= \omega +m$ for $\omega =\binom{m+1}{2}+1$. Furthermore, for every $m\geq 4$, the  extremal system of $\omega$-cliques is unique; denote the corresponding $3$-uniform hypergraph formed by the triples lying in the cliques by $WH(m)$. These witness hypergraphs
all  contain the partial hypergraph $WH(3)$ of order $7+3=10$ defined  in terms of a red/blue $K_{10}^{(3)}$ as follows. 
Label the vertices 
with 
$1,2,3,4,5,6$,$a,b,c,d$; let the vertices with alphabetical label form a blue $K_4^{(3)}$, and let $\{1,a,b\}$,$\{2,a,c\}$,$\{3,a,d\}$,$\{4,b,c\}$,$\{5,b,d\}$,$\{6,c,d\}$ and no other triples  of $K_{10}^{(3)}$ be colored blue. 

\begin{proposition}
\label{4hole}   The red/blue $K_{10}^{(3)}$ corresponding to $WH(3)$  is not convex.
\end{proposition}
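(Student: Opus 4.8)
The plan is to argue by contradiction, assuming the red/blue $K_{10}^{(3)}$ coding $WH(3)$ is convex, and to use the structure of the blue edges to force an impossible configuration via the Lid lemma (Lemma~\ref{hole}) and Proposition~\ref{H1}. The key observation is that the six blue triples $\{1,a,b\},\{2,a,c\},\{3,a,d\},\{4,b,c\},\{5,b,d\},\{6,c,d\}$ together with the blue clique on $\{a,b,c,d\}$ exhibit the combinatorial pattern of a ``$4$-hole'': the four sets $A,B,C,D$ representing $a,b,c,d$ are pairwise disjoint (since $\{a,b,c,d\}$ is a blue $4$-clique, every triple among them is blue, so in particular no two of $A,B,C,D$ meet -- wait, blue only means the \emph{triple} has empty intersection; so I must be careful here). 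Let me restate: a blue edge means the three represented sets have no common point, not that they are pairwise disjoint. So the right reading is that $A\cap B\cap C=A\cap B\cap D=A\cap C\cap D=B\cap C\cap D=\varnothing$, while pairwise intersections may be nonempty. The six single-indexed vertices $1,\dots,6$ each ``see'' a distinct pair from $\{a,b,c,d\}$ in a blue triple, so the set represented by vertex $i$ is disjoint from (the intersection corresponding to) that pair.

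First I would set up notation: let $A,B,C,D$ represent $a,b,c,d$ and $S_1,\dots,S_6$ represent $1,\dots,6$, with the correspondence $1\leftrightarrow\{A,B\}$, $2\leftrightarrow\{A,C\}$, $3\leftrightarrow\{A,D\}$, $4\leftrightarrow\{B,C\}$, $5\leftrightarrow\{B,D\}$, $6\leftrightarrow\{C,D\}$. All other triples of the $K_{10}^{(3)}$ are red. In particular, every triple containing at least one of $1,\dots,6$ and not on the forbidden list is red; this gives a large supply of $3$-wise intersections. The plan is to locate a genuine hole among $A,B,C,D$ and then show that all of $S_1,\dots,S_6$, or enough of them, must contain the Lid-lemma point, contradicting one of the blue triples. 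Concretely: the triple $\{a,b,c\}$ is blue, so $A,B,C$ have no common point; but are they pairwise intersecting? The pair $\{A,B\}$ appears with vertex $1$ in the blue triple $\{1,a,b\}$, which says $S_1\cap A\cap B=\varnothing$ -- this does \emph{not} say $A\cap B=\varnothing$. To get pairwise intersections I would look at red triples: e.g.\ $\{a,b,6\}$? That triple is $\{a,b,c,d\}$-free of the forbidden list, hence red, so $A\cap B\cap S_6\neq\varnothing$, forcing $A\cap B\neq\varnothing$. Similarly every pair among $A,B,C,D$ is witnessed nonempty by some red triple with a third index. Hence $A,B,C,D$ are pairwise intersecting but every triple has empty intersection: this is a ``$4$-hole.''

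Now the heart of the argument: apply the Lid lemma to the hole $A,B,C$. It produces canonical points $p^*\in A\cap B$, $q^*\in B\cap C$, $r^*\in C\cap A$ lying in every convex lid on $A\cup B\cup C$. The set $D$ meets $A\cap B$ (red triple, e.g.\ $\{a,b,5\}$ gives $A\cap B\cap S_5\ne\varnothing$; and $D\cap A\cap B$: is $\{a,b,d\}$ red? No -- $\{a,b,d\}\subset\{a,b,c,d\}$ is blue, so $D\cap A\cap B=\varnothing$!). So $D$ is \emph{not} a lid on $A\cup B\cup C$, which is consistent. Instead I would play the six sets $S_i$ against the holes. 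Consider the hole $A,B,C$ and the blue triple $\{4,b,c\}$: it says $S_4\cap B\cap C=\varnothing$, so $S_4$ is not a lid either. But consider which $S_i$ \emph{are} lids on $A\cup B\cup C$: a lid must meet $A\cap B$, $B\cap C$, and $C\cap A$. The set $S_i$ fails to be a lid exactly when one of $\{i,a,b\},\{i,b,c\},\{i,c,a\}$ is blue, i.e.\ $i\in\{1,4,2\}$. So $S_3,S_5,S_6$ meet all three pairwise intersections of $A,B,C$; I would verify each of the nine relevant triples (e.g.\ $\{3,a,b\}$, $\{3,b,c\}$, $\{3,c,a\}$) is red, using that none lies on the forbidden list. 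Hence $S_3,S_5,S_6$ are convex lids on $A\cup B\cup C$, so all three contain $r^*\in C\cap A$ (and $p^*,q^*$). Now $r^*\in C$ and $r^*\in S_3\cap S_5\cap S_6$; but is $\{3,5,6\}$ or $\{c,5,6\}$ blue? Checking the list: $\{3,5,6\}$ is red, $\{c,5,6\}$ is red -- so no contradiction yet. The point $r^*$ lies in $A\cap C\cap S_3\cap S_5\cap S_6$, a common point of five sets; I need a \emph{blue} triple among $\{a,c,3,5,6\}$. None of those is blue. So I would instead iterate: repeat the lid argument for the other three triangular holes $A,B,D$; $A,C,D$; $B,C,D$, each time extracting three of the $S_i$ as lids and a canonical point. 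The alternative cleaner route, and the one I would actually pursue, is to invoke Proposition~\ref{H1} directly: take the blue edge $\{a,b,c\}$ (a hole) and the disjoint blue edge $\{i,j,k\}$ for a suitable independent blue triple among $1,\dots,6$, but there is no blue triple inside $\{1,\dots,6\}$. So Proposition~\ref{H1} does not apply to two all-blue edges directly; instead the $4$-hole phenomenon itself must be shown impossible.

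The real mechanism, which I expect to be the main obstacle to write cleanly, is this: a $4$-hole $A,B,C,D$ in the plane (four pairwise-intersecting convex sets, every three with empty intersection) \emph{cannot} coexist with the full red pattern required, because of $2$-collapsibility or Kalai's $f$-vector constraint. I would argue that the nerve of $\{A,B,C,D\}$ together with the six $S_i$ forces the red complex to contain the boundary of the $3$-simplex $\{a,b,c,d\}$ (all four $2$-faces present as... no, they're blue). Hmm -- actually since every triple of $\{a,b,c,d\}$ is blue, the $2$-skeleton on $\{a,b,c,d\}$ in the nerve consists only of the six edges (all pairs red: shown above) and \emph{no} $2$-faces. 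That is precisely an ``empty'' $4$-cycle-of-triangles, the $1$-skeleton of $\partial\Delta^3$, which is $K_4$ with no filled triangles. By the Lid lemma applied to the hole $A,B,C$, the canonical triangle $T_{ABC}=\mathrm{conv}(H)$ with vertices $p^*,q^*,r^*$ is contained in every lid. I claim $D$ must meet this triangle: indeed $D$ meets each of $A,B,C$ (pairwise intersections are nonempty), so $D$ meets the hole region's closure... and then by convexity $D$ contains one of $p^*,q^*,r^*$ -- giving $D\cap A\cap B\neq\varnothing$ or $D\cap B\cap C\neq\varnothing$ or $D\cap C\cap A\neq\varnothing$, contradicting that all of $\{a,b,d\},\{b,c,d\},\{c,a,d\}$ are blue. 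Proving ``$D$ meets $A,B,C$ pairwise $\Rightarrow D$ contains one of $p^*,q^*,r^*$'' is the crux: it is a planar topological fact -- $D$ cannot ``thread'' past all three corners of the Lid triangle while avoiding the hole and meeting each of $A,B,C$. I would prove it by noting that $D\setminus(A\cup B\cup C)$ would have to be a connected convex set meeting the hole $H$ or separating... more carefully: if $D$ avoids all of $p^*,q^*,r^*$ then $D\cap(A\cap B)$, $D\cap(B\cap C)$, $D\cap(C\cap A)$ are all empty (each pairwise intersection, being shrunk to a point set situation, forces... no). This last step -- that in the plane a convex set meeting each of three pairwise-intersecting convex sets $A,B,C$ must meet one of the three pairwise intersections when $A\cap B\cap C=\varnothing$ -- is exactly the content I would either cite from the Lid lemma's proof (the triangle $T=\mathrm{conv}(p,q,r)$ for \emph{any} $p\in A\cap B$, $q\in B\cap C$, $r\in C\cap A$ has $\mathrm{cl}(K)\subseteq T$, and $D$ meeting $A,B,C$ must cross into $\mathrm{cl}(K)\subseteq T$, hence must meet the boundary segments $L_A\cap\partial K\subseteq A$, etc.) or establish separately; this is the main obstacle and where I would spend the most care. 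Once it is in hand, convexity of the $WH(3)$ clique yields $D\cap A\cap B\ne\varnothing$ (say), contradicting the blue triple $\{a,b,d\}$, and the proof is complete.
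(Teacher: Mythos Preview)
Your proposal has a genuine gap: the central claim on which it rests --- that four pairwise-intersecting compact convex sets in the plane with every triple having empty intersection (a ``$4$-hole'') cannot exist --- is false. A concrete counterexample: let $A,B,C$ be the three sides of a triangle (so they pairwise meet at the vertices and $A\cap B\cap C=\varnothing$), and let $D$ be the disk bounded by the incircle. Then $D$ meets each side at a single interior point and misses all three vertices, so $A,B,C,D$ are pairwise intersecting while $D\cap A\cap B=D\cap B\cap C=D\cap C\cap A=\varnothing$ and $A\cap B\cap C=\varnothing$. Thus your crux step, ``$D$ meets each of $A,B,C$ $\Rightarrow$ $D$ contains one of $p^*,q^*,r^*$'', simply fails: $D$ meets each side at an interior point and contains no corner. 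The blue pattern on $\{a,b,c,d\}$ alone carries no contradiction; the obstruction has to come from the interaction with the six sets $S_1,\dots,S_6$, and your Lid-lemma attempts with $S_3,S_5,S_6$ correctly noticed that no blue triple shows up among the resulting five sets.

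The paper's proof takes a quite different route and does not use the Lid lemma at all. For each pair $\{x,y\}\subset\{a,b,c,d\}$, if $i$ is the index with $\{i,x,y\}$ blue, then the seven vertices $\{x,y\}\cup(\{1,\dots,6\}\setminus\{i\})$ span a red $K_7^{(3)}$; by Helly the seven corresponding convex sets share a point $P_{xy}$. This produces six points $P_{ab},P_{ac},P_{ad},P_{bc},P_{bd},P_{cd}$, each lying in seven of the ten sets. A short Radon-type case analysis finishes: if some $P_{xy}$, say $P_{ab}$, lies in the convex hull of the other five, then since those five all lie in $S_1$ (each omits only its own index) we get $P_{ab}\in S_1\cap A\cap B$, contradicting the blue edge $\{1,a,b\}$. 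Otherwise the six points form a convex hexagon, and a brief argument with its diagonals (each segment $\overline{P_{xy}P_{xz}}$ lies in the convex set $X$) forces a crossing point in some $X\cap Y\cap S_i$ with $\{i,x,y\}$ blue. The engine is Helly plus the specific bipartite structure matching $\{1,\dots,6\}$ to the pairs of $\{a,b,c,d\}$, not the hole/lid machinery.
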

\begin{proof} Assume to the contrary that there are ten convex sets labeled with the vertex labels and the red subhypergraph of the red/blue $10$-clique  is the $2$-skeleton of their nerve. By Helly's theorem, there are six points
\begin{center}
\(
\begin{array}{cccccc}
P_{ab}&\in& a\cap b\cap 2\cap3 \cap 4\cap 5\cap  6,\quad
&P_{ac}&\in & a\cap c\cap 1\cap 3\cap 4\cap 5\cap  6,\\
P_{ad}&\in & a\cap d\cap 1\cap2 \cap 4\cap 5\cap  6,\quad
&P_{bc}&\in & b\cap c\cap 1\cap 2\cap 3\cap 5\cap  6,\\
P_{bd}&\in & b\cap d\cap 1\cap2 \cap 3\cap 4\cap  6,\quad
&P_{cd}&\in & c\cap d\cap 1\cap 2\cap 3\cap 4\cap  5.
\end{array}
\)
\end{center}
If these six points are not vertices of a convex hexagon, then 
there is one point, say $P_{ab}$, in the convex hull of  the other points. Since these points all  belong to set $1$, we have $P_{ab}\in a\cap b\cap 1$; this is not possible, since $\{1,a,b\}$ is a blue edge. 

Assume now that those six points are vertices of a convex hexagon. 
If there is a point 
 $Q\in(\overline{P_{ab}P_{bc}}\cup\overline{P_{ab}P_{bd}}\cup\overline{P_{bc}P_{bd}})\cap
\overline{P_{ac}P_{ad}}$, then we have $Q\in a\cap b\cap 1$, contradicting that $\{1,a,b\}$ is a blue edge. 
W.l.o.g. assume that $P_{ac}$ and $P_{ad}$ are not consecutive vertices of the hexagon.
The observation above shows that the points $P_{ab}$, $P_{bc}$, and $P_{bd}$ are on the same side of the line
$\overleftrightarrow{P_{ac}P_{ad}}$, thus $P_{cd}$  is on the opposite side of this line. 
Then there is a point $Q\in \overline{P_{ac}P_{ad}}\cap \overline{P_{bc}P_{cd}}$ implying that $Q\in a\cap c\cap 2$, contradicting that $\{2,a,c\}$ is blue.
\end{proof}

We proved\footnote{Ibid.}  that the extremal system $WH(4)$ of the $11$-cliques is unique.  
Because the $3$-uniform witness hypergraph corresponding to $WH(3)$ is not $2$-representable,  and $WH(3)\subset WH(4)$, we obtain $n^*(11)>n(11) =15$.
The value $n^*(11)=16$ 
is justified by the following polygon construction. This construction also yields a few more values of $n^*$.\vspace{\baselineskip}

\noindent {\it Polygon construction.} For $k\geq 3$, let $\mathcal{R}_{k}$ be the regular $k$-gon;  
take the convex hull of every set of $(k-1)$ vertices, and take the convex hull of 
every set of $\lceil k/2\rceil$ consecutive vertices. Thus we obtain $n^*=2k$ convex sets
such that, with the exception of the vertices of $\mathcal{R}_{k}$, the points of $\R^2$ are covered less than $\omega=k-1+\lceil k/2\rceil$ times. Most importantly, we obtain a few values of  $n^*$ in Table \ref{tab:2}.

\begin{center}
\begin{table}[htp]
\sidecaption
\caption{}
\label{tab:2} 
\hskip1.74cm\begin{tabular}{|c||c|c|c|c|c|c|c|}\hline
$k$&3&4&5&6&7&8\\ \hline
$\omega$&4&5&7&8&10&11\\ \hline
$n^*$&6&8&10&12&14&16 \\ \hline
\end{tabular}
\end{table}
\end{center}
For the missing values $n^*(9)$ and  $n^*(12)$ we extend the Polygon construction for $k=5$ and $k=7$, respectively, by repeating two $(k-1)$-gons missing consecutive vertices $P,Q$ of  $\mathcal{R}_{k}$, and including the segment $\overline{PQ}$.
The three new convex sets 
increase the point cover by two, thus yielding the  values $n^*(9)=n^*(7+2)=n^*(7)+3=13$
and $n^*(12)=n^*(10+2)=n^*(10)+3=17$. To obtain constructions verifying that  $n^*(3)=5$ and $n^*(6)=9$ 
we use the following triangle construction.\vspace{\baselineskip}

\noindent{\it Triangle construction.} Let $P,Q,R\in\R^2$ be 
non-collinear points; define the family $\mathcal{F}(\omega)$ of $\omega +\lceil\omega/2\rceil$ segments: $ \lceil\omega/2\rceil$ copies of $\overline{PR}$, and
$ \lfloor\omega/2\rfloor$ copies of each segment, $\overline{PQ}$ and  $\overline{RQ}$; and for $\omega$ odd, we include the single point $Q$ to the family.\vspace{\baselineskip}

It is worth noting that for $\omega=7$  (the case $m=3$ in the Szemer\'edi and Petruska conjecture), there are two extremal systems of $7$-cliques; the witness hypergraph corresponding to $WH(3)$ is not $2$-representable, but the second one coincides with the $3$-uniform intersection hypergraph of the Polygon construction for $k=5$.

\subsection{}
\label{further}
Turning back to Petruska's original question we show that Theorem \ref{k=4}  is sharp in the sense that $K_7^{(3)}$ cannot be replaced  by $K_8^{(3)}$. This claim    follows from the more general proposition that $n^*(k,2)\leq 2k$.
\begin{proposition}
\label{2kbound}
 There are $2k$ convex sets
 in $\R^2$ satisfying that each point of $\R^2$ is covered at most $k$ times, and the $k$-covered points are not contained in the union of two among the convex sets.
\end{proposition}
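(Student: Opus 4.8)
The plan is to exhibit the required family explicitly, and the most transparent instance is when $k$ is even. I would take four long closed segments $D_1,D_2,D_3,D_4$ in general position --- pairwise crossing in their relative interiors, no three through a common point, and no coincidences among endpoints --- and let the family consist of $k/2$ congruent copies of each $D_i$, a total of $2k$ convex sets. A point on a single segment $D_i$ is then covered $k/2$ times, a point at a crossing $D_i\cap D_j$ is covered exactly $k/2+k/2=k$ times, and by general position no point lies on three of the segments; hence no point is covered more than $k$ times and the set $S$ of $k$-covered points is exactly the six crossing points.

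Next I would check that $S$ is not contained in the union of two members of the family. Each member is a copy of some $D_i$ and therefore contains precisely the three crossings lying on $D_i$; two members that are copies of the same segment contain only three crossings, and two members that are copies of distinct segments $D_i,D_j$ contain only the five crossings on $D_i\cup D_j$, missing the crossing $D_l\cap D_m$ with $\{l,m\}=\{1,2,3,4\}\setminus\{i,j\}$. In either case the union omits a $k$-covered point, and (after replacing the segments by genuinely compact sets, which changes nothing) this is exactly the assertion of the proposition for even $k$; one also notes that no single member contains all six crossings, so the conclusion is in fact already violated by transversals of size one against size two.

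For general $k$ the same scheme applies, but the balanced four-segment arrangement must be replaced: starting from a base family $D_1,\dots,D_m$ of convex sets in special position with multiplicities $a_1,\dots,a_m$ summing to $2k$, the $k$-covered region is built from the cells $D_i\cap D_j$ (or higher overlaps) on which the relevant multiplicities add up to $k$, and ``not coverable by two sets'' turns into the combinatorial requirement that these $k$-covered cells be so spread out that for every pair $p\neq q$ some such cell avoids $D_p\cup D_q$ --- equivalently, that the ``heavy graph'' on $[m]$ have no large independent set. The hard part will be reconciling this with the rigid counting identity $k\,|S|=\sum_{C}|C\cap S|\le 2k\cdot\max_C|C\cap S|$ together with the constraints of using exactly $2k$ sets and of coverage exactly $k$: these force $|S|$ and the traces $C\cap S$ to be tightly balanced, which makes the naive four-segment recipe fail for odd $k$, so one must allow triple overlaps (or thin two-dimensional sets, or adapt the polygon and triangle constructions of Section~\ref{t=1}) to obtain an admissible configuration, and the verification that its $k$-covered set genuinely defeats every two-set transversal is the crux.
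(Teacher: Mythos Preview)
Your construction for even $k$ is correct and pleasantly simple: four pairwise crossing segments in general position, each repeated $k/2$ times, give exactly $2k$ compact convex sets, the six crossings are the $k$-covered points, and any two members miss at least one crossing. This is genuinely different from the paper's approach and, for even $k$, arguably more elegant.

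The gap is the odd case. You correctly diagnose that the balanced four-segment recipe breaks --- with multiplicities summing to $2k$ you cannot make all six pairwise sums equal to an odd $k$ --- but you do not supply a replacement. The paragraph about ``heavy graphs,'' triple overlaps, and the counting identity is an honest description of the obstacles, not a construction, and the closing sentence (``the verification \ldots\ is the crux'') concedes as much. As stated the proposition covers all $k\ge 2$, so this is not a cosmetic omission: half the cases are missing.

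The paper sidesteps the parity issue entirely with a single uniform construction. It takes the regular $(2k-1)$-gon $\mathcal{R}_{2k-1}$, lets $M_0$ be the inscribed disk, and for $i=1,\dots,2k-1$ lets $M_i$ be the convex hull of $k$ consecutive vertices starting at $P_i$. Each vertex of $\mathcal{R}_{2k-1}$ lies in exactly $k$ of the $M_i$'s (and not in $M_0$), no point is covered more than $k$ times, and the $k$-covered points include all $2k-1$ vertices together with the midpoints of the sides (which lie in $M_0$). Two cells $M_i, M_j$ can cover all vertices only when they are ``opposite,'' and then they miss the midpoint of the edge between them; any pair involving $M_0$ misses a vertex. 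The point is that $2k-1$ being odd is a feature here, not a nuisance: it forces the $k$-arcs to overlap in the right pattern regardless of the parity of $k$. If you want to salvage your multiplicity approach for odd $k$, you will need an explicit base family and multiplicities; the paper's polygon idea shows one clean way to do it.
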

\begin{figure}[htp]
\centering
\begin{tikzpicture}
\tikzstyle{txt}  = [circle, minimum width=1pt, draw=white, inner sep=0pt]
\tikzstyle{A} = [circle, draw=black!,fill=black,minimum width=2pt, inner sep=.5pt]
\node[draw=none,minimum size=4cm,regular polygon,regular polygon sides=11] (a) {};

\foreach \x in {1,2,...,6,7,8,9,10,11}
  \fill (a.corner \x) circle[radius=1pt];
  \draw[rotate=0] (0,0) ellipse (1.88 and 1.88);
  \draw (.56,-1.9)--(-.56,-1.9)--
  (0,2)--(.56,-1.9)--(1.5,-1.3)--(1.97,-.3)--(1.82,.8)--(1.05,1.69) --(0,2);
    \draw (-.56,-1.9)--(-1.5,-1.3)--(-1.97,-.3)--(-1.82,.8)--(-1.05,1.69)--(0,2)   ;
  \node[A,label=below:{\small $Q$}] (Q)at (0,-1.9){};
   \node[txt,label=above:{\small $P_1$}] (Q)at (0,2){};
    \node[txt,label=below:{\small $P_k$}] (pk)at (0.7,-1.85){};
     \node[txt,label=below:{\small $P_{k+1}$}] (pk1)at (-0.7,-1.8){};
     \node()at(.6,1.2) {\small $M_1$};   \node()at(-1,-1){\small$M_{k+1}$};   
     \node()at(0,-0.5) {\small $M_0$};
\end{tikzpicture}
\end{figure}

\begin{proof}
Let $\mathcal{R}_{2k-1}$ be the regular $(2k-1)$-gon with vertices
 $P_1,\ldots,P_{2k-1}$. Let $M_0$ be the disk of the circle inscribed into $\mathcal{R}_{2k-1}$;  for $i=1,\ldots,2k-1$, let $M_i$ be the $k$-gon cell defined by the convex hull of the $k$ consecutive vertices of $\mathcal{R}_{2k-1}$ starting at $P_i$. 
 
 (1) Observe that each set  $M_i$,  $i\neq 1$, contains either $P_1$ or $P_k$ and not both. 
Therefore, by symmetry, every vertex of $\mathcal{R}_{2k-1}$ is $k$-covered.
Moreover, the non-empty intersection of $k$ cells is a single vertex of
$\mathcal{R}_{2k-1}$. This also  implies  
that no point of $\R^2$ is covered more than $k$-times. 

(2) The disk $M_0$ does not contain any vertex of $\mathcal{R}_{2k-1}$, furthermore, we need at least two $k$-gon cells to cover all vertices.
Without loss of generality assume that $P_1,\dots, P_{2k-1}\in M_1\cup M_{k+1}$. Let $Q$ be the midpoint of the segment $\overline{P_{k+1}P_k}$. Since $Q\in M_0\cap\left(\bigcap\limits_{i=2}^{k}M_i\right)$  is a $k$-covered point, furthermore, $Q\notin M_1\cup M_{k+1}$, a third convex set is necessary to include all $k$-covered points.
\end{proof}
Proposition \ref{2kbound} applied with $k=4$ proves the stronger version of 
Theorem \ref{k=4} as follows.
\begin{theorem} 
\label{k=4sharp} 
If a convex red/blue clique $K_7^{(3)}$ 
contains no red   $K_5^{(3)}$,  then there exist two vertices
such that every red $K_4^{(3)}$ contains at least one of them.
Moreover, the claim is not true if $K_7^{(3)}$ is replaced with $K_8^{(3)}$.\qed
\end{theorem}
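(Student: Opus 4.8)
The first assertion is nothing other than Theorem~\ref{k=4}, already proved in Section~\ref{7convex}, so the plan is to concentrate entirely on the \emph{moreover} clause, for which I would invoke Proposition~\ref{2kbound} with $k=4$. That proposition furnishes a family $\mathcal{M}=\{M_0,M_1,\dots,M_7\}$ of eight compact convex sets in $\R^2$ such that no point of the plane is covered more than four times and such that the set of $4$-covered points is not contained in the union of any two members of $\mathcal{M}$.

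The remaining work is a translation into the language of convex red/blue cliques. I would represent $\mathcal{M}$ by a red/blue $K_8^{(3)}$ in the standard way, colouring a triple red precisely when the three sets it labels have a common point. Because no point lies in five of the sets, Helly's theorem in $\R^2$ rules out any five members of $\mathcal{M}$ having a common point, so $K_8^{(3)}$ contains no red $K_5^{(3)}$ (the maximum red clique size is in fact exactly four, witnessed by the $4$-covered vertices of the regular heptagon used in the construction). Helly's theorem also gives that a red $K_4^{(3)}$ is exactly a quadruple of members of $\mathcal{M}$ with a common point---every triple of the quadruple being red forces a common point of all four---so red $K_4^{(3)}$'s correspond bijectively to the $4$-covered points of $\R^2$. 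Hence a pair of vertices $\{x,y\}$ meets every red $K_4^{(3)}$ if and only if $M_x\cup M_y$ contains every $4$-covered point; Proposition~\ref{2kbound} asserts that no such pair exists, so this $K_8^{(3)}$ is the desired counterexample.

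I do not expect any genuine obstacle here, since Proposition~\ref{2kbound} already carries the geometric weight; the one point deserving care is the Helly dictionary of the previous paragraph, namely that in the plane a quadruple of convex sets all of whose triples intersect automatically has a common point, which is what makes ``$K_4^{(3)}$-transversal of size two'' and ``two sets whose union covers all $4$-covered points'' literally the same statement. Once that is noted, the counterexample transfers verbatim and the proof is complete.
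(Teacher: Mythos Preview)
Your proposal is correct and follows exactly the paper's route: the first assertion is Theorem~\ref{k=4}, and the counterexample on eight sets is Proposition~\ref{2kbound} with $k=4$, translated into the red/blue clique language via Helly's theorem. One cosmetic slip: the correspondence between red $K_4^{(3)}$'s and $4$-covered points is not literally a bijection (several $4$-covered points can witness the same quadruple), but the equivalence you actually need---that a two-vertex transversal of the red $K_4^{(3)}$'s is the same as two sets whose union covers all $4$-covered points---is exactly right and is the content of the paper's remark in the Introduction that the two formulations of Petruska's question are equivalent.
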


\section{Concluding remarks}
\label{remarks}
 
 \subsection{}
 
In terms of the function $n^*(\omega,2)$ defined in Section \ref{arrow}, Petruska's question becomes an extremal problem described by the claim that  
in every family of $n< n^*(k,2)$  planar compact convex sets 
such that no point of the plane is covered  $(k+1)$-times, there are two members 
whose union contains all $k$-covered points.

  Lemma \ref{interconnect} applied with $t=d=2$ relates $n^*(\omega,2)=n^*(\omega,2;2)$ to the corresponding extremal function $n(\omega)=n(\omega,1;3)$ (which is defined for $3$-uniform hypergraphs without the $2$-representability requirement), and leads to  the bound $n^*(\omega,2)\geq n(\omega)+1$.
This inequality combined with the bound  $n(k)\leq (n(k)-k)^2+2(n(k)-k)$ 
due to Tuza \cite{Tu} imply a lower bound,
and Proposition \ref{2kbound} yields an upper bound on  $n^*(k,2)$ as follows: 
$$  k+\sqrt{k}\leq n^*(k,2)\leq 2k.$$ It is worth noting that 
the sharp bound  $n(k)\leq {n(k)-k+2\choose 2}$  conjectured by Szemer\'edi and Petruska \cite{SzP} would yield only the slight improvement $n^*(k,2)\geq k+\sqrt{2k}-O(1)$. 
We are wondering whether $n^*(k,2)\geq (1+\epsilon)k -O(1)$  is true with some $\epsilon>0$.
\subsection{}
The red/blue clique model introduced here allows the discussion of the intersection and non-intersection patterns of convex sets simultaneously, and  in terms of forbidden red/blue subconfigurations. A few general properties of convex red/blue cliques   are included in Section \ref{forbidden}; although, it is not obvious how much convexity must be used in proving covering theorems like Theorem \ref{k=4}. Actually, by conducting  a computer search on $3$-uniform hypergraphs of order $7$, we did verify a
more general version of Theorem \ref{k=4}; the claim  remains true when the convexity requirement is replaced with a bit lighter condition  imposed on the red/blue clique, namely the $2$-representability of the $f$-vector of its red subhypergraph.

An inventory of the simplices in an abstract simplicial complex is usually expressed by the $f$-vector {\bf {f}}${}=(f_0,f_1,f_2,\cdots)$, where $f_k$ is the number of simplices with $k+1$ vertices. Let the {\it $f$-vector of a red/blue clique} be defined as the $f$-vector of the red abstract simplicial complex of the cliques of all sizes generated by the red subhypergraph. 
 For instance, the $f$-vector of a blue Fano-plane is $(7,21,28,7)$.
An $f$-vector {\bf {f}}${}=(f_0,f_1,f_2,\ldots)$ is {\it $d$-representable} if there is a family of convex sets in $\R^d$ such that
the $f$-vector of the nerve of that family is 
equal to  $(f_0,f_1,\ldots)$. 

\begin{theorem} 
\label{vectoros}
Let {\bf {f}}${}=(f_0,f_1,f_2,f_3,0)$ be the $f$-vector of a red/blue $7$-clique. If 
{\bf {f}} is $2$-representable, then the red $4$-cliques have a $2$-vertex transversal.
\end{theorem}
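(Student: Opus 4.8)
The plan is to follow the same combinatorial skeleton as the proof of Theorem~\ref{k=4}, but to check at each stage that only the $2$-representability of the $f$-vector of the red subhypergraph is used, never the full convexity of a representing family. As before, let $H$ be the $3$-uniform hypergraph of blue edges on the $7$ vertices. If $H$ contains a blue $C_3^{(3)}$, then Proposition~\ref{C3} already gives a $K_4^{(3)}$-transversal, and that proposition is purely combinatorial (it uses nothing about convexity). So we may assume $H$ is $C_3^{(3)}$-free. The hypothesis ${\bf f}=(f_0,f_1,f_2,f_3,0)$ means the red complex has no $4$-dimensional simplex, i.e.\ no red $K_5^{(3)}$, hence $\tau(H)\geq 3$. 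Then $H$ is one of the thirteen hypergraphs listed in Propositions~\ref{tauK4} and~\ref{taunoK4}. For the five hypergraphs containing $A=K_4^{(3)}+K_3^{(3)}$, the argument in the proof of Theorem~\ref{k=4} produces a transversal using no geometry at all, so those cases carry over verbatim.

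The remaining eight cases ($C_7^{(3)}$, $F$, $F^-$, $C$, $D$, $D^+$, $B^-$) must each be excluded by showing that the corresponding ${\bf f}$ is \emph{not} $2$-representable. The first key step is therefore to recompute, for each of these eight candidate blue hypergraphs on $7$ vertices, the $f$-vector of the complementary red complex (the complex generated by all red cliques of all sizes), and to argue that in each case this $f$-vector fails Kalai's characterization of $2$-representable (equivalently, convex) $f$-vectors \cite{K1,K2}. Concretely: for $C_7^{(3)}$ and $C$, I would show the red complex is not $2$-collapsible (as in Proposition~\ref{7cycle}) or violates Kalai's $f$-vector inequalities directly; since $2$-representability of an $f$-vector is certainly necessary for $2$-representability of a simplicial complex with that $f$-vector, and since the obstruction detected by Kalai's inequalities depends only on the $f$-vector, this suffices. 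For the cases $F$, $F^-$, $D$, $D^+$, $B^-$, the proof of Theorem~\ref{k=4} invoked Proposition~\ref{V} (hence Proposition~\ref{H2}, hence the Lid lemma) — the genuinely geometric step. The task here is to replace that appeal by an $f$-vector computation: show that the red complex arising when $H$ is one of these five has an $f$-vector that already violates Kalai's characterization, so no family of planar convex sets — and in fact no abstract simplicial complex with a $2$-representable $f$-vector — can realize it.

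The main obstacle is exactly this last point: verifying that for each of $F$, $F^-$, $D$, $D^+$, $B^-$ the obstruction used in Theorem~\ref{k=4} leaves a trace at the level of the $f$-vector. It is conceivable that Proposition~\ref{V} rules out a hypergraph $H$ while some \emph{other} red complex with the same $f$-vector is $2$-representable; in that case a pure $f$-vector argument cannot work and one would instead need to observe that, once ${\bf f}$ is fixed and $2$-representable, the red complex is forced (up to the relevant structure) to contain the configuration to which the Lid lemma applies, and then re-run that argument. I expect the cleanest route is: enumerate all red/blue $K_7^{(3)}$ whose red $f$-vector equals a given $2$-representable ${\bf f}$ with $f_4=0$, observe (as the authors report from their computer search) that in every such configuration the blue part is one of the thirteen hypergraphs \emph{and} the five "geometric" cases never actually arise with a $2$-representable $f$-vector, leaving only the combinatorial cases, which are handled as above. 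Thus the theorem reduces to a finite check that the $f$-vectors of the eight bad red complexes are non-$2$-representable, together with the observation that the $A$-containing cases need no geometry; assembling these gives the $2$-vertex transversal in every case.
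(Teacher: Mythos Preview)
Your overall plan is essentially what the paper does: Theorem~\ref{vectoros} is proved in the paper only by a computer search that, for every red/blue $K_7^{(3)}$ with $f_4=0$ and no $2$-vertex transversal of the red $4$-cliques, checks the resulting $f$-vector against Kalai's numerical characterization and finds it fails. Your structured reduction (Proposition~\ref{C3} is combinatorial; the $A$-containing cases are combinatorial; the remaining bad blue hypergraphs must have non-$2$-representable red $f$-vectors) is a perfectly good way to organize that same finite check, and the paper's computer verification confirms the outcome. Two small bookkeeping corrections: six of the thirteen hypergraphs contain $A$ (namely $A,A_1,A_2,A_3,A_4,B$), not five, so there are seven bad cases ($B^-,C_7^{(3)},F,F^-,C,D,D^+$), not eight.

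There is, however, one genuine confusion in your writeup. You propose to handle $C_7^{(3)}$ and $C$ by showing the red complex is not $2$-collapsible ``as in Proposition~\ref{7cycle}.'' That does not suffice here: $2$-collapsibility is a property of the simplicial complex, not of its $f$-vector. Non-$2$-collapsibility rules out $2$-representability of that specific complex, but another complex with the same $f$-vector might well be $2$-representable, which is all the hypothesis of Theorem~\ref{vectoros} requires. The paper makes exactly this point: it states that the forbidden subconfigurations of Section~\ref{forbidden} do happen to have non-$2$-representable $f$-vectors (e.g.\ $(7,21,28,14,0)$ for $C_7^{(3)}$ fails Kalai), but warns explicitly that ``the proof of Theorem~\ref{k=4} given here does not rise to the level of a combinatorial proof of Theorem~\ref{vectoros}, mainly because the family of complexes with $2$-representable $f$-vector is not closed under taking subhypergraphs.'' So your worry in the last paragraph is well founded, and the resolution is precisely the one you arrive at: for each of the seven bad blue hypergraphs, compute the full red $f$-vector on all seven vertices and test it directly against Kalai's inequalities. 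That is the check, and the paper performs it by computer rather than by hand.
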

Kalai's theorem (\cite{K1,K2}) establishes  a sufficient and necessary numerical condition for an $f$-vector to be 
$d$-representable. This makes possible the computer verification of Theorem \ref{vectoros}, which in turn, implies Theorem \ref{k=4} since, by definition, if a red/blue clique is convex in $\R^d$, 
then its  $f$-vector is $d$-representable, as well.  (The converse is not true, for instance, any tree as a $1$-complex  has $f$-vector $(n,n-1)$, but not all trees are  interval graphs, for $n\geq 7$.)
 
 All the forbidden red/blue cliques described in Section \ref{forbidden}
remain obstructions against the $2$-representability of their $f$-vectors.
For instance, $(7,21,28,14,0)$, the $f$-vector of the blue chordless $7$-cycle is not $2$-representable (yielding another verification that $C_7^{(3)}$ is not convex).
In spite of this, the proof of Theorem \ref{k=4}  given here does not rise to the level of a 
 combinatorial proof of Theorem \ref{vectoros}, mainly because  the family of complexes with $2$-representable $f$-vector is not closed under taking suhypergraphs, which is obvious in case of $2$-representable complexes.\\

 \noindent {\bf Acknowledgment.} We would like to thank Imre B\'ar\'any
 for calling our attention to the related  results of Berge and Ghouila-Houri on the intersections of convex sets.

\end{document}